\begin{document}

\newtheorem{theorem}{Theorem}
\newtheorem{proposition}{Proposition}
\newtheorem{lemma}{Lemma}
\newtheorem{corollary}{Corollary}
\newtheorem{definition}{Definition}
\newtheorem{remark}{Remark}
\newcommand{\tex}{\textstyle}
\numberwithin{equation}{section} \numberwithin{theorem}{section}
\numberwithin{proposition}{section} \numberwithin{lemma}{section}
\numberwithin{corollary}{section}
\numberwithin{definition}{section} \numberwithin{remark}{section}
\newcommand{\ren}{\mathbb{R}^N}
\newcommand{\re}{\mathbb{R}}
\newcommand{\n}{\nabla}
\newcommand{\p}{\partial}
\newcommand{\iy}{\infty}
\newcommand{\pa}{\partial}
\newcommand{\fp}{\noindent}
\newcommand{\ms}{\medskip\vskip-.1cm}
\newcommand{\mpb}{\medskip}
\newcommand{\AAA}{{\bf A}}
\newcommand{\BB}{{\bf B}}
\newcommand{\CC}{{\bf C}}
\newcommand{\DD}{{\bf D}}
\newcommand{\EE}{{\bf E}}
\newcommand{\FF}{{\bf F}}
\newcommand{\GG}{{\bf G}}
\newcommand{\oo}{{\mathbf \omega}}
\newcommand{\Am}{{\bf A}_{2m}}
\newcommand{\CCC}{{\mathbf  C}}
\newcommand{\II}{{\mathrm{Im}}\,}
\newcommand{\RR}{{\mathrm{Re}}\,}
\newcommand{\eee}{{\mathrm  e}}
\newcommand{\LL}{L^2_\rho(\ren)}
\newcommand{\LLL}{L^2_{\rho^*}(\ren)}
\renewcommand{\a}{\alpha}
\renewcommand{\b}{\beta}
\newcommand{\g}{\gamma}
\newcommand{\G}{\Gamma}
\renewcommand{\d}{\delta}
\newcommand{\D}{\Delta}
\newcommand{\e}{\varepsilon}
\newcommand{\var}{\varphi}
\newcommand{\lll}{\l}
\renewcommand{\l}{\lambda}
\renewcommand{\o}{\omega}
\renewcommand{\O}{\Omega}
\newcommand{\s}{\sigma}
\renewcommand{\t}{\tau}
\renewcommand{\th}{\theta}
\newcommand{\z}{\zeta}
\newcommand{\wx}{\widetilde x}
\newcommand{\wt}{\widetilde t}
\newcommand{\noi}{\noindent}
\newcommand{\uu}{{\bf u}}
\newcommand{\xx}{{\bf x}}
\newcommand{\yy}{{\bf y}}
\newcommand{\zz}{{\bf z}}
\newcommand{\aaa}{{\bf a}}
\newcommand{\cc}{{\bf c}}
\newcommand{\jj}{{\bf j}}
\newcommand{\ggg}{{\bf g}}
\newcommand{\UU}{{\bf U}}
\newcommand{\YY}{{\bf Y}}
\newcommand{\HH}{{\bf H}}
\newcommand{\GGG}{{\bf G}}
\newcommand{\VV}{{\bf V}}
\newcommand{\ww}{{\bf w}}
\newcommand{\vv}{{\bf v}}
\newcommand{\hh}{{\bf h}}
\newcommand{\di}{{\rm div}\,}
\newcommand{\ii}{{\rm i}\,}
\def\I{{\rm Id}}
\newcommand{\inA}{\quad \mbox{in} \quad \ren \times \re_+}
\newcommand{\inB}{\quad \mbox{in} \quad}
\newcommand{\inC}{\quad \mbox{in} \quad \re \times \re_+}
\newcommand{\inD}{\quad \mbox{in} \quad \re}
\newcommand{\forA}{\quad \mbox{for} \quad}
\newcommand{\whereA}{,\quad \mbox{where} \quad}
\newcommand{\asA}{\quad \mbox{as} \quad}
\newcommand{\andA}{\quad \mbox{and} \quad}
\newcommand{\withA}{,\quad \mbox{with} \quad}
\newcommand{\orA}{,\quad \mbox{or} \quad}
\newcommand{\atA}{\quad \mbox{at} \quad}
\newcommand{\onA}{\quad \mbox{on} \quad}
\newcommand{\ef}{\eqref}
\newcommand{\mc}{\mathcal}
\newcommand{\mf}{\mathfrak}

\newcommand{\ssk}{\smallskip}
\newcommand{\LongA}{\quad \Longrightarrow \quad}
\def\com#1{\fbox{\parbox{6in}{\texttt{#1}}}}
\def\N{{\mathbb N}}
\def\A{{\cal A}}
\newcommand{\de}{\,d}
\newcommand{\eps}{\varepsilon}
\newcommand{\be}{\begin{equation}}
\newcommand{\ee}{\end{equation}}
\newcommand{\spt}{{\mbox spt}}
\newcommand{\ind}{{\mbox ind}}
\newcommand{\supp}{{\mbox supp}}
\newcommand{\dip}{\displaystyle}
\newcommand{\prt}{\partial}
\renewcommand{\theequation}{\thesection.\arabic{equation}}
\renewcommand{\baselinestretch}{1.1}
\newcommand{\Dm}{(-\D)^m}

\title
{\bf Blow-up scaling and global behaviour\\ of  solutions of the
bi-Laplace equation\\ via pencil operators}

\author{Pablo~\'Alvarez-Caudevilla and Victor~A.~Galaktionov}

\address{Universidad Carlos III de Madrid,
Av. Universidad 30, 28911-Legan\'es, Spain -- Work phone number: +34-916249099}
\email{pacaudev@math.uc3m.es}

\address{Department of Mathematical Sciences, University of Bath,
 Bath BA2 7AY, UK}
\email{vag@maths.bath.ac.uk}

\keywords{Bi- and Laplace equations, higher-order equations,
pencil of non self-adjoint operators,  harmonic polynomials, nodal sets}

\thanks{This work has been partially supported by the Ministry of Economy and Competitiveness of
Spain under research project MTM2012-33258.}

 \subjclass{31A30, 35A20, 35C11, 35G15}
\date{\today}




\begin{abstract}
 As the main problem, the bi-Laplace equation
 \[
  \D^2 u=0 \quad (\D=D_x^2+D_y^2)
 \]
 in a bounded domain $\O \subset \re^2$,
 with inhomogeneous Dirichlet
 or Navier-type conditions on the smooth boundary $\p \O$ is
 considered. In addition, there is a finite collection of  curves
  \[
 \Gamma = \Gamma_1\cup...\cup\Gamma_m \subset \O, \quad 
 \mbox{on which we assume homogeneous Dirichlet 
 conditions} \quad u=0,
  \]
  focusing at the origin $0 \in \O$ (the analysis would be similar for any other point) .
  This  makes the above elliptic problem overdetermined. Possible
  types of the behaviour of solution $u(x,y)$ at the tip $0$ of such
  admissible multiple
  cracks, being a singularity point, are described, on the basis of
   blow-up
  scaling techniques and spectral theory of pencils of non
  self-adjoint operators. Typical types of admissible cracks are
  shown to be governed by nodal sets of a countable family of
  {\em
harmonic
 polynomials}, which are now represented as  pencil eigenfunctions, instead of their
 classical representation via a standard Sturm--Liouville
 problem. Eventually, for a fixed admissible crack formation at
 the origin, this allows us to describe {\em all} boundary data,
 which can generate such a blow-up crack structure. In particular,
 it is shown how the co-dimension of this data set increases with
 the number of asymptotically straight-line cracks focusing at 0.

\end{abstract}

\maketitle

\section{Introduction}
 \label{S1}

\subsection{Models and preliminaries}


\noindent   In this work we intend to ascertain the behaviour
of the solutions of the \emph{bi-Laplace equation} with Dirichlet
boundary conditions in a bounded smooth domain $\O \subset \re^2$
\begin{equation}
\label{bila}
\left\{\begin{array}{cc}
    \Delta^2 u =0   &  \hbox{in}\quad \O,\\
    u=f(x,y) &  \hbox{on}\quad \G,\\
    u=g(x,y),\;\; \frac{\p u}{\p {\bf n}}=h(x,y) &  \hbox{on}\quad \p\O,\\
\end{array} \right.
\end{equation}
where $\D=D_x^2+D_y^2$ is the standard Laplace operator in $\re^2$, ${\bf n}$ stands for the unit outward
normal to $\p\O$, and $f$, $g$ and $h$ are given smooth functions in
$\O$, so that $g^2(x,y)+h^2(x,y) \not \equiv 0$. In our particular case, $\O$
is assumed to have, what we refer to as a multiple crack $\G$ which is composed of a finite collection
of $m \ge 1$ curves (to be described below)
 \be
 \label{curv1}
  \G= \G_1 \cup \G_2 \cup...\cup \G_m \subset \O  \quad \mbox{such that each $\G_j$ passes through the
  origin $0 \in \O$}.
   \ee
The origin is then  the tip of this crack. 
Indeed, in the present research, we assume that, near the origin, in the
lower half-plane $\{y<0\}$ (similarly in the upper half-plane $\{y>0\}$), all cracks asymptotically  take a
straight line form, i.e. as shown in Figure \ref{figcrack},
 \be
 \label{str1}
 \G_k:  \,\, x=\a_k (-y)(1+o(1)), \,\, y \to 0, \,\,\,
 k=1,2,...,m,\,\,
 \mbox{where} \,\,\,
 \a_1<\a_2<...< \a_m
  \ee
  are given constants. Basically we are choosing cracks of the type described by \eqref{str1} that 
  will allow us to obtain possible types of the behaviour of solutions $u(x,y)$ of the equation \eqref{bila}.  
  Indeed, \emph{a posteriori} the analysis carried out throughout this paper will show that those types of cracks are the only
  admissible ones. 
  For further extensions a different analysis must be done.  
Thus, the precise statement of the problem assumes that geometrical conditions such as \ef{str1}
describe {\em all the admissible cracks near the origin}, i.e.
{\em no other straight-line cracks are considered}.

Moreover, in our basic model, we assume homogeneous Dirichlet
conditions on the crack for the bi-Laplacian problem \eqref{bila}
\be
 \label{Dir1}
 u=0 \onA \G,
  \ee
   that makes the problem overdetermined, so that only some types of such
   multiple cracks \ef{curv1}, \ef{str1} are admissible. Note that, since we will obtain an explicit expression 
   for the solutions of the bi-Laplace problem \eqref{bila}, \emph{a posteriori} we 
   will be able to impose any condition on the crack $\Gamma$. 
   
   Thus, our main goal is to describe all possible types of admissible
   multiple cracks, for which the above elliptic problem can have
   a solution, at least for some boundary data $g$ and $h$ in
   \ef{bila}. To this end, actually, we need to describe all
   types of zero or nodal sets, which are admitted by oscillatory solutions $u(x,y)$ of the bi-Laplace
   equation.
   
   Therefore, performing a proper rescaling and using non-self-adjoint spectral pencil operator 
   theory (see  \cite{Kon1,Kon2,KL} and Section\;\ref{S2} in this paper for further details about these types of operators),
    we are able to show
special linear combinations of  ``harmonic polynomials" ascertaining important qualitative
information about the behaviour
  of the solution based on the nodal set of harmonic polynomials, especially close to the tip of the crack
  $\G$ for which we will describe all the admissible types of cracks.
Specifically, we show that their nodal sets
 play a key role in the general multiple crack problem for various
 equations. In fact, the analysis is extended, as an example to future improvements, to a couple of 
  non-linear problems, and intends to provide an alternative methodology in the analysis of similar problems with singularity points on the boundary.

 Consequently, the novelty of this work consists in approaching the study of some boundary value problems,
exhibiting a crack singularity, by a ``blow-up" and ``elliptic evolution" approach
  using the spectral theory of pencils of non self-adjoint
operators.

In fact we base our analysis on 
the application of the
  spectral theory of pencil operators, transforming the problem appropriately and, hence, reducing it to solve 
  a 1D spectral problem. We also believe that the analysis presented here can be extended to 
  other problems providing a different technique to obtain important qualitative information; 
  see for instance \cite{CGpLap} for an application of the techniques presented here for a $p$-Laplacian problem.

 Note also that, nowadays, this kind of technique is normally used
for parabolic or hyperbolic equations, not elliptic (cf.
\cite{AdPiv,GalHardy}). On the other hand, it is necessary to
recall that the pioneering Kondratiev's study in the 1960s \cite{Kon1,
Kon2} of boundary regularity/asymptotics for general linear
elliptic (and ultra-parabolic) equations was, for the first time,
performed via an ``elliptic evolution approach", with all typical
features available: suitable blow-up scaling at a boundary point,
operator pencil analysis, etc. 

In particular, Kondratiev applied those blow-up scalings to 
analyse non-smooth domains, such as domains with corner points, edges, etc on the boundary, forming cone-type domains (boundary shape like a cone). 
After some variable transformations he was able to transform the equation 
into a pencil spectral problem permitting the analysis at those corner points. 

In this work, we have just used the fundamental ideas of Kondratiev, performing a different change of variable and, hence,
obtaining a different pencil of non-self-adjoint operator that allows us to ascertain the behaviour of the solutions for that multiple crack section \eqref{str1}. 
However, our cone is in the interior and, actually with no connection with the boundary (the tip of the crack is not on the boundary) 
while Kondratiev analysed problems in cone-type domains on the boundary. 

Thus, as we will show below we obtained completely different functional spaces and solutions with a specific polynomial form which provide us with the behaviour 
at the tip of the cracks.  Indeed, we will show that this {\em
internal crack problem} requires polynomial eigenfunctions of
different pencils of linear operators, which were not under
scrutiny in Kondratiev's works.

Those arguments were used in  \cite{EGKP}  for a case where the boundary, 
after blow-up scalings, is at infinity, and it was not easy to convince Kondratiev himself that it was a new and different case.

\begin{figure}[htp]

\includegraphics[scale=0.4]{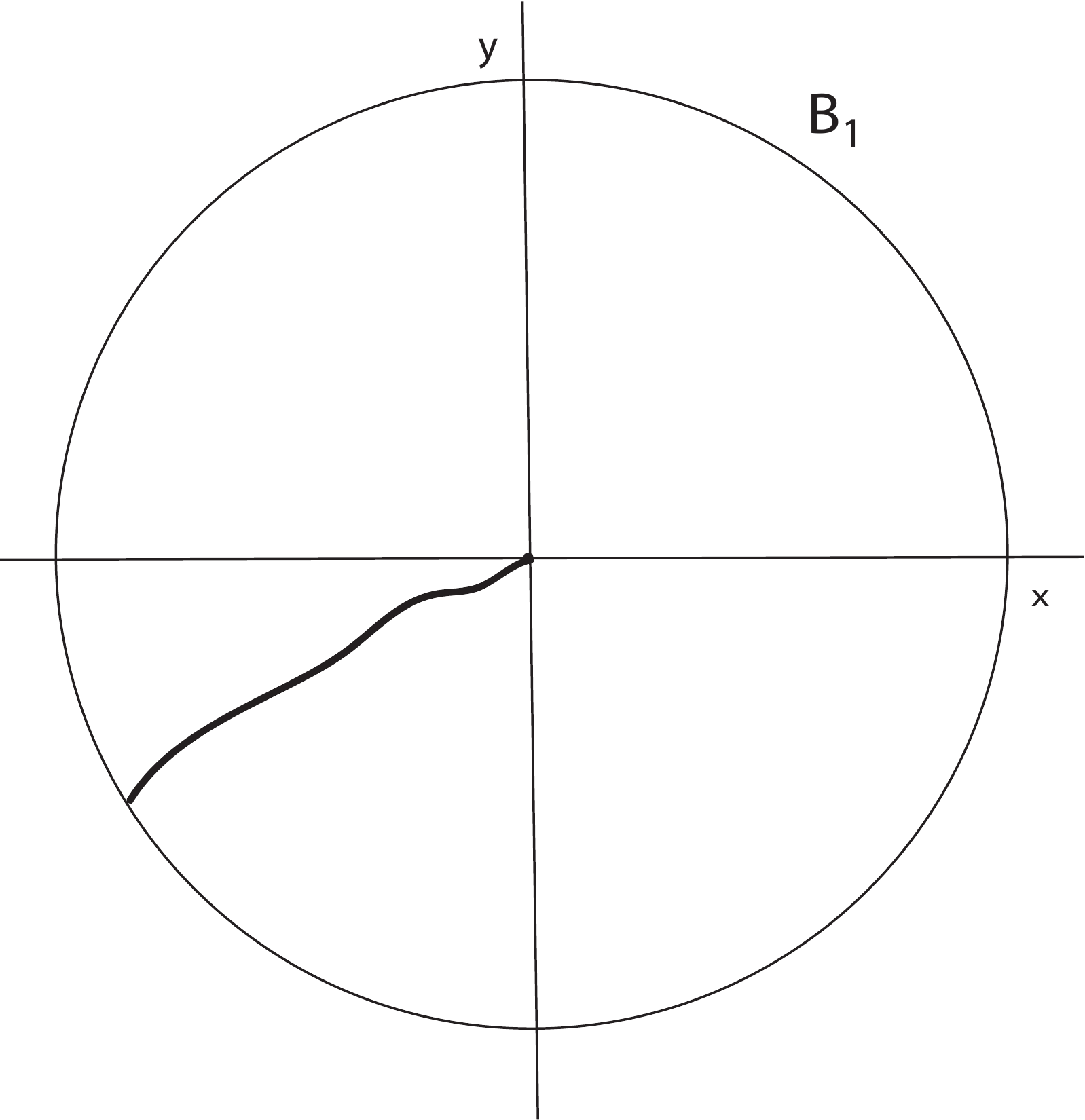}

\vskip -0.2cm \caption{ \small One-crack model.} \label{figcrack}
\end{figure}


\subsection{Approach and main results}


In the study of such admissible cracks, i.e. the behaviour of the
solutions when $(x,y) \to (0,0)$, it suffices to consider
 \be
 \label{dom}
\mc{D}= B_1\setminus \G, \quad \mbox{a
unit ball in $\re^2$ centered at the origin $0$ minus the crack $\G$}.
 \ee
 For other, not pointwise blow-up
estimates, we continue to consider general smooth domains $\O$.

Thus, even though our main motivation to develop this work was the
analysis of the bi-Laplace equation \eqref{bila}, we shall start
with similar multiple crack issues for the Laplacian.
Since the bi-Laplace operator is the iteration of two
  Laplacians, inevitably,
   we will need to start the analysis of the problem by using the pure single Laplacian
   \be
 \label{Lap1}
  \D u=0, \inB \O \,\,(=B_1), \quad u=f\, (\not \equiv 0) \onA \p \O, \quad
   u=0 \onA \G,
   \ee
   to obtain those results. Note that here we have shifted $f$ on the boundary with respect to the problem \eqref{bila}.

 Additionally, we shall complete our work with the study
  of several other problems as well. These problems are going to be defined
again  under the geometrical condition \eqref{str1} but
considering various different operators, such as other semilinear
 related equations.

\vspace{0.2cm}

\noindent\underline{\em First step. Laplace equation with multiple
cracks}.
 As a
by-product of our approach, we
consider the problem for the Laplace equation \eqref{Lap1}. 
 For this simpler problem in Section\;\ref{SLap},
 we prove that all the solutions with cracks at 0 must satisfy
 \be
 \label{decola}
  \tex{
  u(x,y)=w(z,\t)= \sum_{(k \ge l)} \eee^{-k \t} [c_k\psi_{k,1}^*(z)+ d_k \psi_{k-1,2}^*(z)],
   \quad \mbox{with} \quad
c_l^2+d_l^2 \ne 0,
  }
  \ee
  where
  \be
  \label{scal}
  \tex{  z= x/(-y) \quad \hbox{and}\quad  \t = - \ln
(-y) \forA y<0,
}
\ee
 where $\{\psi_{k,1}^*(z),\psi_{k-1,2}^*(z)\}$ are two families of harmonic polynomials (re-written in terms of the rescaled variable $z$),
   denoted by
 \[
 \tex{\psi_{l_+}^*(z)\equiv \psi_{l,1}^*(z) \quad \hbox{and} \quad \psi_{l_-}^*(z)\equiv \psi_{l-1,2}^*(z), \quad \hbox{for any}\quad  l=m,m+1,\cdots}
 \]
 associated with two families of eigenvalues
 \[
 \tex{
  \l_l^+=-l, \quad l=1,2,3,... \andA \l_l^-=-l-1, \quad
  l=0,1,2,3,... \,.
  }
\]
 and, such that, they have a polynomial expression 
 \[
 \tex{\psi_l^*(z)=\sum\limits_{k=l,l-2,...,0} a_k z^k\quad (a_l=1).}
 \]
 Rescaling \eqref{scal} corresponds to a blow-up scaling near the origin, moving the singularity point at the origin into an asymptotic convergence
 when $\tau$ goes to infinity. 
 Moreover, we will keep this ``blow-up scaling logic" for the rest of other
similar problems to appear. Also, this scaling approach could be extended to $y>0$ in a similar way.

Note that we choose such an expression depending on $z$ for convenience since, as it is shown later in Section\;\ref{SLap}, 
we deal with eigenfunctions of a quadratic pencil of operators
 and not with a standard Sturm--Liouville problem. 
 
 Indeed, after performing the rescaling \eqref{scal} (and then the separation variables method) 
we transform the Laplace equation into a pencil of non-self-adjoint operator, in particular for this 
case, a quadratic pencil operator of the form
$$(1+z^2)(\psi^*)''+2(\l+1)z (\psi^*)' +\l(\l+1)
 \psi^*=0.
$$   
Therefore, in the main result of Section\;\ref{SLap}, Theorem\;\ref{Th.2}, we prove 
that the coefficients $c_l, \, d_l \in \re$ in \eqref{decola} are arbitrary constants that satisfy 
\[c_l^2+d_l^2 \ne 0.\] 
Indeed, we observe in  
the first leading terms while approaching the origin, a linear
combination of those two families of eigenfunctions  as classic harmonic
polynomials.

On the other hand, it is also proved that, if all $\{\a_k\}$ in \ef{str1} do not coincide with
all $m$ subsequent zeros of any nontrivial linear combination
 \be
 \label{dm1}
c_l \psi_{l,1}^*(z)+d_l \psi^*_{l-1,2}(z), \quad \mbox{with} \quad
c_l^2+d_l^2 \ne 0,
 \ee
 then the multiple crack problem \ef{Lap1}
cannot have a solution for any boundary Dirichlet data $f$ on
$\p \O$.

However, a solution exists if for some $l$ the zero condition is satisfied, i.e. $\a_l$ coincides with the zeros 
of the linear combination of harmonic polynomials \eqref{dm1},  and 
\[
\tex{|u(x,y)|=O(|(x,y)|^l) \quad \hbox{as}\quad (x,y)\to (0,0).}
\]
Moreover and obviously, restricting to $\G$
all types of
admissible crack-containing expansions \ef{decola} (with closure
in any appropriate functional space), {\em fully describes all
types of boundary data, which lead to the desired crack formation
at the origin a posteriori}.
The previous discussion is summarised in Theorem \ref{Th.2}.

Although, one can assume the function $u(x,y)$ in \eqref{decola} belonging to the Sobolev space $W^{1,2}(\O\setminus\G)=H^1(\O\setminus\G)$, 
 due to the expansions considered (for the Laplace problem \eqref{Lap1} and also for the bi-Laplace \eqref{bila} $W^{2,2}$) in our analysis we actually have that
 the eigenfunctions are harmonic functions of Hermite-type polynomials which are complete in any appropriate 
 $H^1_\rho$ or $L^p_\rho$-space, where the weight $\rho$ has a exponential decay at infinite. For example 
 $$\rho(z) \sim {\mathrm e}^{- a z^2}\quad \hbox{(or ${\mathrm e}^{-a |z|}$)},\quad \hbox{$a>0$ small,}$$
would be enough;  see \cite{KolF} for further details about this classical analysis.

\begin{remark}. 
Furthermore, it follows from \ef{dm1} that any admissible crack distribution
governed by zeros of the polynomial \ef{dm1}, i.e. the nodal set of the polynomial eigenfunctions, represented by pencil eigenfunctions instead of the 
classical one from the Sturm--Liouville problem, for any $l=1,2,...$,
contains a single free parameter (say, $\frac {d_l}{c_l} \in \re$,
$c_l \ne 0$). 

In other words, the whole set of admissible multiple
straight-line crack formations \ef{str1} (and basically the reason we assume such a family of cracks) comprises no more than a
{\em countable family of one-dimensional subsets}\footnote{Bearing
in mind the rotational invariance of the Laplace operator
 (as a one-dimensional group of orthogonal transformations in
 $\re^2$), the total family of {\em essentially distinct} admissible crack configurations
becomes no more than a {\em countable subset}, which is described
by subsequent zeros of the harmonic polynomials.
At the same time, general Dirichlet data $f(x,y)$ on $\O\setminus\G$ is
obviously characterised as an uncountable subset.
}
 (recall that
this is true for arbitrary Dirichlet data $f(x)$ on $\p\O$, which,
as we mentioned above, {\em a posteriori}, can be completely
described).

\end{remark}

\noindent\underline{\em Extensions to semi-linear equations}. Although not the purpose of this paper, using a couple of examples 
\be
\label{semil22}
\tex{\D u + |u|^{p-1}u=0\quad \hbox{and} \quad \D u + \frac{|u|^{p-1}u}{x^2+y^2}=0, \inB \O \subset \re^2 \whereA p>1},
\ee
via similar scalings and asymptotic analysis, we are able to show the types of decay patterns at the origin showing 
an application of the results obtained for the Laplace problem \eqref{Lap1} to semi-linear equations. We just use those two 
equations as examples of what could happen for non-linear problems.

Indeed, for the first equation 
we find that the nonlinear term is negligible, i.e. it cannot affect those patterns. 
We will show that performing the same rescaling \eqref{scal} as for the Laplace equation \eqref{Lap1} 
will lead to an exponentially small perturbation in $\tau$ of the rescaled Laplacian one. Hence, when $\tau$ goes to infinity (this means, due to the rescaling, close to the origin
for the variables $(x,y)$)
the nonlinearity does not have any effect for the decay patterns at infinity (or the origin).

However, for the second equation, for which we involve the 
nonlinearity in a formation of multiple zeros at the origin we show through some numerical analysis, the nodal sets of several nonlinear eigenfunctions
obtained after the rescaling.   

Note that we just use those two equations as examples for the asymptotic behaviour at infinity, after the rescaling. For those purposes 
we only need to consider that $p>1$.  

\ssk

\noindent\underline{\em Finally, the bi-Laplace equation with
multiple cracks}. Obviously, since $\D^2=\D \D$, the solutions of
the Laplace equation also solve the bi-Laplacian. Therefore, some
of the results obtained for the Laplacian (see Section\;\ref{SLap}
for further details) can be translated and applied to \ef{bila}
with the same crack constraints, though, nevertheless, the latter
one is more demanding. Indeed, a full description of admissible
multiple crack configurations \ef{str1} for \ef{bila} is more
difficult.

For the problem \eqref{bila} we find that the solutions have an expression of the form
\be
\label{exp44}
  \tex{
  u(x,y)=w(z,\t)= \sum_{(k \ge l)} \eee^{-k \t} [C_k\psi_{k,1}^*(z)+ D_k \psi_{k-1,2}^*(z)+E_k \psi_{k-2,3}^*(z)+F_k \psi_{k-3,4}^*(z)],
  }
 \ee
where, again, we use the same scaling \eqref{scal} and
 \[
 \phi^*=\{\psi_{l,1}^*,\psi_{l,2}^*, \psi_{l,3}^*,\psi_{l,4}^* \},
 \]
are four harmonic polynomial eigenfunction families, with the same
$z$-representation, which are complete in any reasonable weighted
$L^2$ space such that
\[
 \psi_{l,1}^*(z) \equiv \psi_{l,1}^*(z),\quad \psi_{l,2}^*(z) \equiv \psi_{l-1,2}^*(z),\quad \psi_{l,3}^*(z)
  \equiv \psi_{l-2,3}^*(z),\quad \psi_{l,4}^*(z) \equiv \psi_{l-3,4}^*(z),
  \]
  now associated with four families of negative eigenvalues
   \begin{align*} & \tex{
  \l_{l,1}=-l, \quad l=1,2,3,... \qquad \quad \l_{l,2}=-l-1, \quad
  l=0,1,2,3,...} \\ &  \tex{\l_{l,3}=-l-2, \quad
  l=0,1,2,3,... \andA  \l_{l,4}=-l-3, \quad
  l=0,1,2,3,... \,
  }
  \end{align*}
  of the corresponding pencil operator
  \begin{align*}
 {\bf F}_\l^* \psi^* \equiv  & \{(\l^4+6\l^3+11\l^2+6\l)I+ 4 (\l^3+6\l^2+11\l) zD_z
 \\ & +2(1+3z^2)(\l^2+5\l)D_z^2+4\l(1+z^2)z D_z^3  - {\bf C}^*\} \psi^*=0.
 \end{align*}  
  Note that again we use  such
non-standard notations of harmonic polynomials in order to fit our
operator pencil approach.

Thus, four collections of expansion coefficients $\{C_k\}$,
$\{D_k\}$, $\{E_k\}$, and $\{F_k\}$
  (which depend on boundary
 data on $\p\O$ admitted all types of cracks at 0, which is obviously described via \ef{exp44}) take place so that
 if all $\{\a_k\}$ of the multiple cracks \eqref{str1} do not coincide with  all $m$ subsequent zeros of any
nontrivial linear combination
\[
C_l \psi^*_{l,1}(z)+D_l \psi^*_{l,2}(z)+E_l \psi^*_{l,3}(z)+F_l \psi^*_{l,4}(z), \quad \mbox{with} \quad
C_l^2+D_l^2+ E_l^2+F_l^2 \ne 0,
\]
 then the multiple crack problem \ef{bila}
cannot have a solution for any boundary Dirichlet data $g$, $h$ on
$\p\O$ for the problem \eqref{bila}. 

\begin{remark}. 
The linear combinations previously shown arise naturally from the spectral theory of the operators involved (Laplacian, bi-Laplacian). Indeed, for the Laplacian since
$u$ is harmonic it can be decomposed as a sum of homogeneous harmonic functions. In particular, for the Laplacian operators we obtain two families of eigenfunctions 
denoted by
$$\{\psi^*_{k,1}\},\quad \{\psi^*_{k-1,2}\}.$$
The difficulty in ascertaining here the results at the tip of the cracks comes from the regularity problem we are facing here in $\O=B_1$,
for the eigenvalue problem, since we have a singularity point
at the origin. However, we are in the context analysed in \cite{AL} so that
 \[
 \tex{w(z , \t)= {\mathrm e}^ {\l \t} \psi^*(z ),\quad \hbox{where} \quad  {\rm Re} \, \l<0,}
 \]
 and for a orthonormal   basis $\{\psi^*_{k,1},\psi^*_{k-1,2}\}$ of harmonic polynomial eigenfunctions,
  we find that the solutions of the problem \eqref{Lap1}
 are a decomposition of the form \eqref{decola}. We conclude similar
  arguments for the bi-Laplacian problem \eqref{bila}.
  \end{remark}
  
 \begin{remark}
 We would like to explain that the rescaling introduced in this paper \eqref{scal} allows us to get solutions 
 of the polynomial form \eqref{decola} and \eqref{exp44} as special linear combinations of  ``harmonic polynomials" that show the behaviour of the problems considered here at the tip of the cracks
 depending on the nodal set of those ``harmonic polynomials".  Indeed, our rescaling \eqref{scal} since it is different from the one used by Kondratiev in \cite{Kon1,Kon2} 
 produces a different pencil operator and, hence, different eigenfunctions for the corresponding pencil non-self-adjoint operators. However, the expansions 
 \eqref{decola} and \eqref{exp44} based on those polynomial eigenfunctions are capable of providing the behaviour of the solutions at the tip of the cracks. 
   \end{remark}

 \begin{remark}
As one of the referees of this paper pointed out for the problems under analysis in this paper (Laplace, class of perturbations of the Laplace and Bi-Laplace) solutions 
with a ``strong zero" ($(|u(x)| = O(|x|^N)$, for any $N > 0$) do not exist. Indeed, results
on Unique Continuation guarantee that any solution with a strong zero is
identically zero.  
\end{remark}

\vspace{0.2cm}

\noindent\underline{\em Some further extensions}.  
Though
our approach is done in two dimensions, the scaling blow-up
approach applies to $\O$ in $\re^3$ (or any $\ren$), where
spherical polynomials naturally occur such that their nodal sets
(finite combination of nodal surfaces) of their  linear
combinations, as above, describe all possible local structures of
cracks concentrating at the origin. However, if $N>2$ the possible geometry of the 
crack $\G$ is far richer.

 Additionally, though very difficult to achieve, one can study similar crack problems for other
 elliptic equations such as
  \[
  u_{xxxx}+u_{yyyy}=0,
  \]
  where non-standard harmonic-like polynomials naturally occur, such as
  eigenfunctions of a {\em quartic operator pencil}, like the ones we obtained for the bi-Laplace equation \eqref{bila}.

Further extensions to quasilinear
 equations, but out of the scope of this paper, 
  such as the
quasilinear {\em $p$-Laplace equation} 
\be
 \label{p1}
 \D_p u \equiv \n \cdot (|\n u|^{p-2} \n u) =0,
  \ee
could be also carried out applying this analysis; see \cite{CGpLap}. However, in this particular case 
after performing a similar blow-up scaling one arrives at a nonlinear eigenvalue problem. Since 
this problem is nonlinear and the ideas used
for the linear case, when $p=2$, cannot be applied directly,
the corresponding {\em nonlinear
  eigenfunctions} of the nonlinear pencil for \ef{p1} should be
  obtained by branching from harmonic polynomials as
  eigenfunctions of the quadratic pencil that occurs for the
  Laplacian in \ef{Lap1}.
  
  The main reason is due to the fact that for the Laplace equation \eqref{Lap1}, as explained above, it is possible to explicitly obtain
two families of negative eigenvalues associated, respectively, to two families of eigenfunctions.
Indeed, from the expressions of the two families of eigenvalues 
and using the pencil operator theory, we can ascertain the coefficients of every eigenfunction
explicitly as well.
However, following the same argument for the nonlinear PDE \eqref{p1} it is not possible, in general,
to get the corresponding families of eigenvalues and, hence, the associated eigenfunctions. Therefore, the branching argument 
  used in \cite{CGpLap} is applied to get such information about the eigenfunctions, and hence eigenvalues of a nonlinear problem.

\section{Pencils of linear operators: preliminaries}
 \label{S2}


\noindent As one of the main tools we are using in this work to get to the results 
and in a direct connection with our
blow-up evolution approach we introduce the Theory of Pencil Operators.  
Let us mention that pencil operator
theory appeared and was crucially used in the regularity and
asymptotic analysis of elliptic problems in a seminal paper by
Kondratiev \cite{Kon2} and also for parabolic problems in
\cite{Kon1}, where spectral problems, that are nonlinear
(polynomial) in the spectral parameter $\l$, occurred. Later on,
Mark Krein and Heinz Langer \cite{KL} made a fundamental
contribution to this theory analysing the spectral theory for
strongly damped quadratic operator pencils. In general, a polynomial pencil
operator is denoted by
\be
\label{penop}
 A(\l):=A_0+\l A_1+\cdots+\l^n A_n,
 \ee
 where $\l \in
{\mathbb C}$ is a spectral parameter and $A_i$, with
$i=0,1,\cdots, n$, are linear operators acting on a Hilbert space
$X$ (here we might assume for example that $X$ can be $H^1_\rho$ or $L^2_\rho$ with any reasonable weight $\rho$). 
Operators of the form \eqref{penop} are sometimes called Polynomial matrix when the linear differential operators
$A_i$ are matrices. A linear pencil of operators has the form
  \[
  A(\l):= A-\l B,
  \]
where $A,B$ are two linear operators. In the simplest case, we
have the linear pencil operator
 \[
 A(\l)= A-\l \, {\rm Id}, \quad
\hbox{or}\quad A(\l)={\rm Id} -\l A,
 \]
  which represents the usual (standard)
linear spectral  problems. A clear difference between those
spectral linear problems and the pencil operators is essentially
that, for the simplest pencil operators, the set of eigenvalues is
obtained as the roots of the characteristic equation
 \[
 {\rm
det}\, A(\l)=0,
   \]
    i.e. powers of the values $\l_k$, with the basis
of the eigenspace as
\[\{\psi_k,\l_k\psi_k,\cdots,\l_k^{n-1}\psi_k\}.\]

Furthermore, the analysis  of polynomial pencil operators has been under
scrutiny for many years in order to study spectral problems of the
form \eqref{penop} and, as pointed out by Markus \cite{Mar}, arise
naturally in diverse areas of mathematical physics (differential
equations and boundary value problems), with applications to
Elasticity, Hydrodynamics problems, among other things. In the
pioneering work of M.V.~Keldysh in 1951 (earlier first ever
results of J.D.~Tamarkin's  PhD Thesis of the 1917 should be
mentioned as well; see Markus \cite{Mar} for this amazing part of
the history of
       mathematics)
pencils, including multiplicity results and completeness of the
set of eigenvectors, even for non-self-adjoint operators, were
thoroughly analysed.

As mentioned at the beginning of this
section, one of the most important contributions, and related with
the analysis carried out here, was made by Krein \& Langer
\cite{KL} who developed further approaches for quadratic pencil
operators of the form 
\[A(\l)=\l^2 A_2+\l A_1+A_0.\]
  In this paper,
we will use elements  of this well-developed spectral theory of
non-self-adjoint quadratic or of fourth order pencil polynomials,
though not that profound ones, since, for linear elliptic
problems, we always deal with polynomial (harmonic)
eigenfunctions, which cause no problem concerning their
completeness, closure, and further functional properties.

\section{The Laplace equation: crack distribution via nodal sets of transformed harmonic polynomials}
 \label{SLap}

Since $\D^2=\D \D$, we inevitably should first consider the
multiple crack problem for the Laplace equation \ef{Lap1}.
Obviously, these admissible crack distributions remain valid for
the bi-Laplace \ef{bila}, \ef{Dir1}, but, in addition, there are
other types of such ``singularities" at the origin; see
Section\;\ref{S3}.

Recall again that, also, in representing our pencil approach (to
be used later on for other linear and nonlinear elliptic
problems), in dealing with the classic Laplace equation, we will
re-discover several standard and well-known facts from any
text-book on linear operators. However, what is more important,
the general structure of such an approach will proceed in what
follows.

\subsection{Blow-up scaling and rescaled equation}

First we show the required transformations with which we will obtain the pencil operators 
that eventually will provide us with the behaviour of the solutions at the tip of the crack for the Laplace problem \eqref{Lap1}.

Thus, assuming the crack configuration as in Figure \ref{figcrack}, we
introduce the following rescaled variables, corresponding to
``blow-up" scaling near the origin $0$:
  \be
  \label{resc1}
  \mbox{$
u(x,y)= w(z ,\t), \quad \mbox{with} \quad z=  x/{(-y)}
 \quad \mbox{and} \quad  \t = - \ln
(-y) \forA y<0,
 $}
  \ee
   to get the rescaled operator 
 \be
 \label{www.20}
\D_{(x,y)}u= \eee^{2 \t} \,\big[ D_\t^2+ D_\t+ 2 z D^2_{z \t} +
(1+z^2)D_z^2 +2z D_z\big]w \equiv \D_{(z ,\t)}w.
 \ee
Therefore, we arrive at the
equation
 \be
 \label{www.2}
  w_{\t\t} + w_\t + 2z w_{z  \t} = {\bf A}^* w \equiv -(1+z
^2) w_{zz }- 2 z w_z .
 \ee
 
\begin{remark} Note that $\AAA^*$ is symmetric in the standard (dual) metric of
$L^2(\re)$,
 \be
 \label{aa1}
 \AAA^* \equiv - D_z[(1+z^2)D_z],
  \ee
  though we are not going to use this. Indeed, for our crack
  purposes, we do not need eigenfunctions of the ``adjoint"
pencil, 
since we are not going to use
eigenfunction expansions of solutions of the PDE \ef{www.2}, where
bi-orthogonal basis could naturally be wanted.

\end{remark}

This blow-up analysis of \ef{www.2}
assumes a kind of  ``elliptic evolution" approach for elliptic
problems, which  is not well-posed in the Hadamard's sense, but,
in fact, can trace out the behaviour  of necessary global orbits
that reach and eventually decay to the singularity point
$(z,\t)=(0,+\iy)$. 

By the crack condition \ef{Dir1}, we
look for vanishing solutions: in the mean and  uniformly on
compact subsets in $z$,
 \be
  \label{zer1}
   w(z,\tau) \to 0 \asA \t \to +\iy .
    \ee
Therefore, under the rescaling \eqref{resc1}, we have converted
the singularity point at $0$ into an asymptotic convergence
when $\tau \to \infty$.

Hence,  we are forced to describe a very thin family of
solutions for which
we will describe their possible nodal
sets to settle the multiple crack condition in \ef{Lap1}. This
corresponds to Kondratiev's ``evolution" approach \cite{Kon1,
Kon2} of 1966, though it was there directed to different {\em
boundary point regularity} (and asymptotic expansions) questions,
while the current crack problem assumes studying the behaviour at
an {\em internal} point $0 \in \O$ such as the tip of the multiple
crack under consideration. We will show first that this {\em
internal crack problem} requires polynomial eigenfunctions of
different pencils of linear operators, which were not under
scrutiny in Kondratiev's works.
Indeed, in doing so, we will ``re-discover" classic harmonic
polynomials, which will play a key role. Therefore, in studying
such classical objects, we could omit many technical details, but,
anyway, prefer to keep some of them for the sake of comparison and general
logic.

\subsection{Quadratic pencil and its polynomial eigenfunctions}

Now, we ascertain the quadratic pencil operator associated with equation \eqref{www.2}. So that, 
 looking, as usual in linear PDE theory,  for solutions of
\ef{www.2} in separate variables
 \be
\label{YYY.112S}
 w(z , \t)= {\mathrm e}^ {\l \t} \psi^*(z ) \whereA {\rm Re} \, \l
 <0 \,\,\, \mbox{by (\ref{zer1})},
 \ee
yields the eigenvalue problem for a {\em quadratic pencil}
of non self-adjoint operators, 
 \be
 \label{Pen.1}
 {\bf B}_\l^* \psi^* \equiv  \{\l(\l+1)I+ 2 \l z  D_z  - {\bf A}^*\} \psi^*=0
 \,\, \mbox{or}\,\, (1+z^2)(\psi^*)''+2(\l+1)z (\psi^*)' +\l(\l+1)
 \psi^*=0.
   \ee
   
    \vspace{0.2cm}

\begin{remark}
\label{re34}
The  second-order operator ${\bf A}^*$ is singular  at the
infinite points $z= \pm \infty$, so this is a singular quadratic
pencil eigenvalue problem. Since the linear first-order operator
in \ef{Pen.1}, $z D_z$, is not symmetric
 in $L^2$, we are not obliged to attach the whole operator to any
 particular functional space.
 Therefore, the behaviour as $z \to \infty$  is not that
crucial, and any $L^2_\rho$-space setting  with $\rho(z) \sim
{\mathrm e}^{- a z^2}$ (or ${\mathrm e}^{-a |z|}$), $a>0$ small,
would be enough.

Indeed, if the solution of the problem \eqref{Lap1} is smooth in
certain weighted spaces $H^1_\rho$ or $L^2_\rho$, we claim that,
then, the eigenfunctions $\psi^*$ of the operator \eqref{Pen.1}
are also analytic at infinity.
\end{remark}

\begin{remark} The differential part in \ef{Pen.1}
can be reduced to a symmetric form in a weighted
$L^2_{\rho_\l}$-metric:
  \be
  \label{symm12}
  \tex{
  (1+z^2) D_z^2+ 2(\l+1) z D_z \equiv (1+z^2) \frac
  1{\rho_\l} D_z(\rho_\l D_z) \whereA \rho_\l =(1+z^2)^{\l+1}.
   }
   \ee
 Note that this weighted metric has an essential dependence
 on the  {\em a priori} unknown eigenvalues.

\end{remark}

Furthermore, we cannot forget that once the rescaling \eqref{resc1} is performed, these eigenfunctions of the quadratic pencil operator $(\ref{Pen.1})$ 
are actually harmonic polynomials (just introducing the variables \eqref{resc1}). Then, as usual in orthogonal polynomial theory, we can now state the
following property for the eigenfunctions of the adjoint pencil \eqref{Pen.1} with respect to its family of eigenvalues that will be determined below;
see \cite{Det,Fun} for details about this Sturm--Liouville Theory as well as Subsection $3.6$ below.

\begin{proposition}
 \label{Pr.PolP}
 The only acceptable eigenfunctions of the adjoint pencil $(\ref{Pen.1})$
are finite polynomials.
 \end{proposition}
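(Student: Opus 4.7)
The plan is to substitute a formal power series ansatz $\psi^*(z)=\sum_{k\ge 0} a_k z^k$ directly into the pencil ODE \eqref{Pen.1} and collect coefficients of $z^k$ after shifting indices. The upshot is the two-term recursion
\[
(k+2)(k+1)\,a_{k+2} = -\bigl[k(k-1)+2(\lambda+1)k+\lambda(\lambda+1)\bigr] a_k,
\]
in which the bracket factors as $k^2+(2\lambda+1)k+\lambda(\lambda+1)=(k+\lambda)(k+\lambda+1)$, so
\[
a_{k+2}=-\frac{(k+\lambda)(k+\lambda+1)}{(k+1)(k+2)}\,a_k .
\]
Even and odd indices decouple, yielding two formal linearly independent solutions generated by $(a_0,a_1)=(1,0)$ and $(0,1)$, as expected for a second-order ODE.

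The algebraic observation that does the work is that this recursion \emph{terminates} precisely when $\lambda=-k$ or $\lambda=-k-1$ for some non-negative integer $k$. These are exactly the two eigenvalue families $\lambda_l^+=-l$ and $\lambda_l^-=-l-1$ recorded in the introduction, and the corresponding truncated series furnish the polynomial eigenfunctions $\psi_{l,1}^*$ and $\psi_{l-1,2}^*$ with the advertised parities and degrees.

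The hard part, and the actual content of the proposition, is excluding the non-terminating branch. In that case $|a_{k+2}/a_k|\to 1$ as $k\to\infty$, so the Maclaurin series has radius of convergence exactly $1$, matching the regular singular points $z=\pm i$ of the ODE. To study the behaviour on the real axis for large $|z|$ I would perform the standard Frobenius analysis at $z=\infty$: in the variable $\zeta=1/z$ the indicial equation $r(r-1)+2(\lambda+1)r+\lambda(\lambda+1)=0$ has roots $r=-\lambda$ and $r=-\lambda-1$, so every solution of \eqref{Pen.1} behaves at infinity like $c_1 z^{-\lambda}+c_2 z^{-\lambda-1}$ (with a logarithmic correction in the resonant case when the two exponents coincide). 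Unless $-\lambda$ or $-\lambda-1$ is a non-negative integer, these growth modes are multivalued around $z=\infty$ and generate branch singularities in the complex plane, violating the analyticity-at-infinity criterion emphasised in Remark~\ref{re34}; on the real line they also fail to match a profile coming from a smooth Laplace solution on $\mathcal{D}$ under the blow-up change of variables \eqref{resc1}.

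The genuinely delicate step is therefore the one this proposition encodes: pinning down the word ``acceptable'' so that the non-polynomial branch is ruled out. Once this is done, whether via membership in a weighted space $L^2_\rho$ with $\rho\sim e^{-az^2}$ combined with the extra regularity inherited from the original smooth Laplace solution across $\{y=0^-\}$, or via the analyticity-at-infinity reformulation of Remark~\ref{re34}, the Frobenius dichotomy forces $\lambda\in\{-1,-2,-3,\ldots\}$, the recursion terminates, and $\psi^*$ is a finite polynomial of degree $-\lambda$ or $-\lambda-1$, as asserted.
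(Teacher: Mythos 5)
Your series computation, the factorisation $(k+\lambda)(k+\lambda+1)$ of the recursion coefficient, and the Frobenius exponents $-\lambda,\,-\lambda-1$ at $z=\infty$ are all correct, and your route (an ODE-side dichotomy: either the recursion terminates or the solution has non-integer power behaviour at infinity) is genuinely different from the paper's. The paper does not analyse the equation at $z=\infty$ at all: its proof is a short appeal to interior elliptic regularity of the \emph{original} solution. Since $u$ is harmonic, hence real-analytic, in a full neighbourhood of the origin (the crack is a nodal-set condition, not a slit removed from the domain), the blow-up scaling \eqref{resc1} merely resolves the local structure of a finite-order zero of an analytic function, and such a zero can only produce a finite (harmonic) polynomial profile; formal non-polynomial eigenfunctions are then discarded because, in the limit $\tau\to+\infty$ in \eqref{YYY.112S}, they would force a non-analytic or even discontinuous trace of $u$ at $y=0$ -- the example \eqref{001}, $\tilde\psi^*(z)=\tan^{-1}z$, producing the impossible trace $\mathrm{sign}\,x$, is exactly the paper's illustration.

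The gap in your argument sits precisely where you hedge: the exclusion of the non-terminating branch. One of your two proposed acceptability criteria, membership in $L^2_\rho$ with $\rho\sim e^{-az^2}$, cannot do this job. Every solution of \eqref{Pen.1} has at most power growth $|z|^{-\mathrm{Re}\,\lambda}$ or $|z|^{-\mathrm{Re}\,\lambda-1}$ at infinity, so \emph{all} of them lie in every such exponentially weighted space; the paper says this explicitly for $\tan^{-1}z$, which ``belongs to any suitable $L^2_\rho$-space'' and must be rejected for a different (regularity/trace) reason. Your other criterion, analyticity at infinity, is borrowed from Remark~\ref{re34}, which in the paper is itself only a claim conditional on smoothness of $u$; so invoking it does not close the argument but pushes the burden back to the original solution. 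To make your proof complete you would have to supply what the paper actually uses: that $u$ is analytic at the interior point $0$ with a zero of finite order (cf.\ the Unique Continuation remark in the introduction), so that the rescaled profile $\psi^*$ must be the $z$-rewriting of a homogeneous harmonic polynomial -- equivalently, that any non-polynomial solution of \eqref{Pen.1} with $\mathrm{Re}\,\lambda<0$ as in \eqref{zer1} would reconstruct, via $u=(-y)^{-\lambda}\psi^*(x/(-y))$, a function that fails to extend analytically (or even continuously) across $\{y=0\}$ near the origin. With that step made explicit, your Frobenius dichotomy does yield the proposition; without it, the conclusion rests on an acceptability notion that your own weighted-space version does not enforce.
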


 \vspace{0.2cm}
 
 Although, it is well known that these eigenfunctions of the quadratic pencil operator $(\ref{Pen.1})$ 
are finite polynomials, it should be pointed out that this is associated with the interior elliptic
regularity.

Indeed, the blow-up approach under the rescaling \eqref{resc1} just
specifies local structure of multiple zeros of analytic functions
at 0, and since all of them are finite (we are assuming \eqref{str1} with a finite number of cracks) we must have finite
polynomials only. 

Of course, there are other formal eigenfunctions
(we will present an example; see \eqref{001} below), but those, in the limit as $\t \to +
\iy$ in \ef{YYY.112S}, lead to non-analytic (or even discontinuous)
solutions $u(x,y)$ at 0, that are non-existent.

 On the other hand, since our pencil approach, currently, is
nothing more than re-writing via scaling the standard
Sturm--Liouville eigenvalue problem for harmonic polynomials (see Subsection\;\ref{stlio}), it is
quite natural  to deal with nothing other than them, which, thus,
should be re-built in terms of the scaling variable $z$.

 Moreover, the next lemma shows 
 the corresponding point spectrum of the pencil \eqref{Pen.1}.

\begin{lemma}
 The quadratic pencil operator \eqref{Pen.1} admits two families of eigenfunctions 
 $$
  \tex{\psi_{l_+}^*(z)\equiv \psi_{l,1}^*(z) \quad \hbox{and}
  \quad \psi_{l_-}^*(z)\equiv \psi_{l-1,2}^*(z), \quad \hbox{for any}\quad  l=m,m+1,\cdots}
  $$
associated with two corresponding families of eigenvalues 
 \be
 \label{ei11}
 \tex{
  \l_l^+=-l, \quad l=1,2,3,... \andA \l_l^-=-l-1, \quad
  l=0,1,2,3,... \,,
  }
  \ee
 \end{lemma}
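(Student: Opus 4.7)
The plan is to determine all polynomial solutions of the pencil ODE in \eqref{Pen.1} by the classical power-series method. Substituting $\psi^*(z) = \sum_{k \ge 0} a_k z^k$ into
$$(1+z^2)(\psi^*)'' + 2(\l+1)z(\psi^*)' + \l(\l+1)\psi^* = 0$$
and equating coefficients of $z^k$, I expect to arrive at the two-term recurrence
$$(k+2)(k+1)\,a_{k+2} = -(k+\l)(k+\l+1)\,a_k, \qquad k \ge 0,$$
after using the factorisation $k(k-1) + 2(\l+1)k + \l(\l+1) = (k+\l)(k+\l+1)$. The crucial structural feature is that this recurrence couples only indices of the same parity, so the even subsequence $\{a_{2j}\}$ and odd subsequence $\{a_{2j+1}\}$ propagate independently.

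In view of Proposition \ref{Pr.PolP}, I may restrict to polynomial solutions. Requiring that the same-parity series terminate at some degree $n$ (i.e.\ $a_n \ne 0$ but the multiplier $(n+\l)(n+\l+1)$ vanishes) forces
$$\l = -n \quad \text{or} \quad \l = -n-1,$$
yielding exactly the two announced families. Re-indexing with $l = n$ produces $\l_l^+ = -l$ for $l \ge 1$ in the first case and $\l_l^- = -l-1$ for $l \ge 0$ in the second.

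For each eigenvalue I would then construct the eigenfunction by integrating the recurrence downward from the monic leading coefficient $a_l = 1$, producing in each case a polynomial of the form $\psi_l^*(z) = \sum_{k=l,l-2,\ldots} a_k z^k$, with all terms of a single parity matching $l$. As an independent cross-check, I note that the substitution $z = x/(-y)$, $\t = -\ln(-y)$ carries any homogeneous harmonic polynomial of degree $n$ in $(x,y)$ into $\eee^{-n\t}$ times a polynomial in $z$, so polynomial eigenfunctions of the pencil correspond exactly to harmonic polynomials. The identity $(x+\ii y)^n = (-y)^n (z - \ii)^n$ then exhibits, via its real and imaginary parts, the two independent polynomial eigenfunctions $\psi_{l,1}^*$ (degree $l$) and $\psi_{l-1,2}^*$ (degree $l-1$) sharing the eigenvalue $-l$, thereby reconciling the two family labels.

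The main obstacle I anticipate is not computational but organisational: keeping the indexing consistent so that the decomposition \eqref{decola} recovers, at each exponential level $\eee^{-k\t}$, precisely the two-dimensional polynomial eigenspace at $\l = -k$, and verifying that the two families together exhaust it. Showing that no further polynomial eigenfunctions exist at any $\l$ follows at once from the recurrence, since outside the values $\l \in \{-l, -l-1 : l \in \mathbb{N}\}$ the factor $(k+\l)(k+\l+1)$ is never zero and the series cannot terminate; together with Proposition \ref{Pr.PolP}, this completes the classification.
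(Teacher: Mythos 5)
Your argument is correct and is essentially the paper's own: the paper substitutes the polynomial ansatz \eqref{eig00} into \eqref{Pen.1} and reads off, from the $O(z^l)$ term, the characteristic quadratic $\l^2+(2l+1)\l+l(l+1)=0$, i.e. $(\l+l)(\l+l+1)=0$, which is exactly your termination condition $(n+\l)(n+\l+1)=0$ in the two-term recurrence (the recurrence itself being the content of the paper's Theorem \ref{Th.1}). Your extra observations (parity decoupling, the identification via $(x+\ii y)^l=(-y)^l(z-\ii)^l$ of the two independent polynomial eigenfunctions at $\l=-l$, and the exhaustiveness of the classification) are consistent refinements rather than a different route.
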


\begin{proof}
 In order to find the corresponding point spectrum of the pencil we look for $l$th-order polynomial eigenfunctions of the form
 \be
 \label{eig00}
  \tex{
\psi_l^*(z)=z^l+a_{l-2}z^{l-2}+ a_{l-4}z^{l-4}+... =
\sum\limits_{k=l,l-2,...,0} a_k z^k, \quad (a_l=1),
  }
   \ee
   that we already know they are harmonic polynomials. Substituting \eqref{eig00} into (\ref{Pen.1})
   and evaluating the higher order terms
 yields the following quadratic equation for eigenvalues:
  \be
  \label{eig11}
 O(z^l): \quad  \l_l^2 + (2l+1) \l_l + l(l + 1)=0.
   \ee
Solving this characteristic equation yields the two families of real
negative eigenvalues under the expression \eqref{ei11}
  associated with two families of eigenfunctions denoted by
 \be
 \label{harpol}
  \tex{\psi_{l_+}^*(z)\equiv \psi_{l,1}^*(z) \quad \hbox{and}
  \quad \psi_{l_-}^*(z)\equiv \psi_{l-1,2}^*(z), \quad \hbox{for any}\quad  l=m,m+1,\ldots,}
  \ee
for convenience.  
\end{proof}

 \vspace{0.2cm}
 
 The next result calculates those  (re-structured harmonic)
  polynomials \eqref{eig00}, \eqref{harpol} as the corresponding eigenfunctions of the pencil. 

 \begin{theorem}
  \label{Th.1}
 The quadratic pencil \ef{Pen.1} has two (admissible) discrete spectra
 $\ef{ei11}$ of real negative eigenvalues with the finite polynomial
 eigenfunctions given by \ef{eig00}, where the expansion
 coefficients satisfy a
 finite Kummer-type recursion corresponding to the operator in
$(\ref{Pen.1})$:
  \be
  \label{co1}
   \left\{
    \begin{array}{ccc} \tex{
    a_{k+2}= -
    \frac{k(k-1) +2(\l_l^\pm+1)k +\l_l^\pm(\l_l^\pm+1)
   }{(k+2)(k+1)} \,  a_{k},} & \hbox{for any} & k=l,l-2,...,2,
         \\
        \tex{ a_1= -\frac{6}{ 2(\l_l^\pm+1)  +\l_l^\pm(\l_l^\pm+1)}a_3} & \hbox{and} & \tex{a_0 =- \frac{2}{\l_l^\pm(\l_l^\pm+1)} a_2.}
        \end{array}\right.
    \ee
  \end{theorem}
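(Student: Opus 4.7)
The plan is to plug the polynomial ansatz $\psi_l^*(z)=\sum_{k=l,l-2,\dots} a_k z^k$ (with $a_l=1$) directly into the ODE form of the pencil \ef{Pen.1}, namely
\[
(1+z^2)(\psi^*)''+2(\l+1)z(\psi^*)'+\l(\l+1)\psi^*=0,
\]
and match coefficients of equal powers of $z$. Since this ODE has only the regular point at $z=0$ inside the coefficient polynomials (the only singularities at $z=\pm\iy$ are irrelevant for finite polynomial solutions), the resulting system decouples into one three-term recurrence between $a_{k+2}$ and $a_k$, and a separate algebraic eigenvalue condition that comes from the top coefficient.

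Computing each piece, $(1+z^2)(\psi^*)''$ contributes $(k+2)(k+1)a_{k+2}+k(k-1)a_k$ to the coefficient of $z^k$, while $2(\l+1)z(\psi^*)'$ contributes $2(\l+1)k\,a_k$ and $\l(\l+1)\psi^*$ contributes $\l(\l+1)a_k$. Collecting, I obtain
\[
(k+2)(k+1)a_{k+2}+\bigl[k(k-1)+2(\l+1)k+\l(\l+1)\bigr]a_k=0,
\]
which is exactly the recursion \ef{co1}. Setting $k=l$ and imposing $a_{l+2}=0$ (so that the polynomial truncates at degree $l$) yields $\l^2+(2l+1)\l+l(l+1)=0$, which factors as $(\l+l)(\l+l+1)=0$; this recovers the two eigenvalue families $\l_l^\pm$ from \ef{ei11} established in the preceding lemma. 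Running the recursion \emph{downwards} from $a_l=1$ then determines $a_{l-2},a_{l-4},\dots$ uniquely, producing a polynomial of parity matching $l\bmod 2$. The stated formulas for $a_1$ (when $l$ is odd, obtained at $k=1$) and $a_0$ (when $l$ is even, obtained at $k=0$) are just the recursion written at the bottom of the ladder, inverted to express the lowest coefficient in terms of the one above.

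The only subtle point, and the main thing to verify carefully, is that the denominators in \ef{co1} never vanish along the admissible range $k=l,l-2,\dots,0$ (or $\dots,1$). For $k\ge 2$ the factor $(k+2)(k+1)$ is obviously nonzero. For the bottom rungs I need to check that $2(\l_l^\pm+1)+\l_l^\pm(\l_l^\pm+1)$ (used to recover $a_1$) and $\l_l^\pm(\l_l^\pm+1)$ (used to recover $a_0$) are nonzero whenever the corresponding coefficient $a_1$ or $a_0$ is actually required; e.g.\ $\l=0,-1$ would annihilate the latter, but these values are excluded by the admissible range of $l$ in the lemma, and for the odd case the numerator of the quadratic defining $a_1$ from $a_3$ does not vanish either.

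Finally, I would close with a brief justification that no \emph{non}-polynomial eigenfunctions are admissible, so that the list in the theorem is complete: any formal series solution of \ef{Pen.1} that does not truncate grows like $|z|^{-2(\l+1)}$ as $z\to\pm\iy$ (readable from the Frobenius exponents at infinity, equivalently from the conjugate factor $\rho_\l=(1+z^2)^{\l+1}$ appearing in the symmetric form \ef{symm12}), and thus, via \ef{YYY.112S} and the scaling \eqref{resc1}, would generate solutions $u(x,y)$ that fail to be analytic (or even continuous) at the origin; see also Remark \ref{re34} and Proposition \ref{Pr.PolP}. Hence finite polynomial eigenfunctions are the only admissible ones, and they are exhausted by the recursion \ef{co1} together with the eigenvalues \ef{ei11}.
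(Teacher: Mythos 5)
Your proposal is correct and follows essentially the same route as the paper: substitute the ansatz \ef{eig00} into the ODE form of \ef{Pen.1}, match coefficients of $z^k$ to get the three-term relation yielding \ef{co1}, and read off the eigenvalues \ef{ei11} from the top-degree (truncation) condition, exactly as in the paper's Lemma and proof of Theorem \ref{Th.1}. Your extra checks (non-vanishing of the bottom-rung denominators for admissible $l$, and the exclusion of non-polynomial solutions via growth/analyticity) are sound additions that the paper delegates to Proposition \ref{Pr.PolP} and the surrounding analyticity remarks rather than to this proof.
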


\begin{proof}
    It is clear by \eqref{ei11} that the quadratic pencil \ef{Pen.1} has two discrete spectra
    of real negative eigenvalues with two families of finite polynomial
 eigenfunctions\footnote{Note that, within this pencil ideology,
 the eigenfunctions are ordered in an unusual manner, unlike
 the standard harmonic polynomials.}
$$
\{\psi_{l_+}^*(z)\},\quad \{\psi_{l_-}^*(z)\},\quad \hbox{such that}\quad \psi_{l_+}^*(z)
\equiv \psi_{l,1}^*(z)\quad \hbox{and}\quad \psi_{l_-}^*(z)\equiv \psi_{l-1,2}^*(z),
$$
given by \ef{eig00} and corresponding associated with the two families of eigenvalues $\l_l^+$ and $\l_l^{-}$ .
Substituting $\psi_l^*=\sum\limits_{k\geq 0}^l a_k z^k$, for any $l\geq 0$, into (\ref{Pen.1})
    we find that, for any $\l$,
$$
 \tex{
 (1+z^2) \sum\limits_{k\geq 2}^l k(k-1) a_k z^{k-2} + 2(\l+1) \sum\limits_{k\geq 1}^l k a_k
z^{k}+\l(\l+1) \sum\limits_{k\geq 0}^l a_k z^{k}=0,
 }
 $$
and hence,
\be
\label{coefla}
\begin{split}
\sum\limits_{k\geq 2}^l & \left[(k+2)(k+1) a_{k+2}+ k(k-1) a_k+2(\l+1)ka_k +\l(\l+1)a_k\right] z^k \\ & +
\left[ 6a_3 + [2(\l+1)  +\l(\l+1)]a_1\right] z+ 2a_2+\l(\l+1) a_0=0.
\end{split}
\ee
Therefore, evaluating the coefficients we find that
$$\left\{\begin{array}{l}
    (k+2)(k+1) a_{k+2}+ k(k-1) a_k+2(\l+1)ka_k +\l(\l+1)a_k=0,\quad k=l,l-2,...,2,\\
    6a_3 + [2(\l+1)  +\l(\l+1)]a_1=0,\\
    2a_2+\l(\l+1) a_0=0,\end{array}\right.
    $$
    and we arrive at  \eqref{co1}, completing the proof. 
    \end{proof}

\begin{remark}
    \rm{Alternatively, we also have that
    \[\tex{
     a_{l-2n}= -
    \frac{(l-2n+2)(l-2n+1)
   }{(l-2n)(l-2n-1)+2(2\l_l^\pm+1)(l-2n)+\l_l^\pm(\l_l^\pm+1)} \,  a_{l-2n+2},
    \quad n=1,2,...,[\frac l2]; \quad a_l=1.
    }
    \]
    }
\end{remark}

Note that even when
discrete spectra coincide excluding the first eigenvalue
$\l_l^-$, and, more precisely,
\[
\l_l^-= \l_l^+-1= \l_{l+1}^+ \quad l=1,2,3,...\,,
\]
we still have two different families of eigenfunctions. For future
convenience and applications for the crack problem for $m=1,2,3$,
and 4, (with $m=l$), we present the first four
eigenvalue-eigenfunction pairs of both families of eigenfunctions
for the pencil \ef{Pen.1}, which now are ordered with respect to
$\l=-l$, $l=0,1,2,...$:
\be
\label{pol111}
 \begin{split}
\l_0=0, &  \quad \mbox{with} \,\,\, \psi^*_0(z)=1 \,\,\,(\ne 0);\\
\l_1=-1, & \quad \mbox{with} \,\,\, \psi^*_{1,1}(z)=z, \quad
\psi^*_{0,2}(z)=1 \,\,\,(\ne 0);\\
 \l_2=-2, & \quad \mbox{with} \,\,\,\psi^*_{2,1}(z)=z^2-1,
 \,\,\, \psi^*_{1,2}(z)=z; \\
  \l_3=-3, & \quad \mbox{with} \,\,\,
 \psi^*_{3,1}(z)=z^3 - 3z,\,\,\,
\psi^*_{2,2}(z)=3z^2-1; \\
 \l_4=-4, & \quad \mbox{with} \,\,\,
\psi^*_{4,1}(z)=z^4 - 6z^2+1,\,\,\,
 \psi_{3,2}^*(z)=z^3 -z;\,\,\, \mbox{etc.}
 \end{split}
 \ee

 \vspace{0.2cm}

\noi{\bf Remark: about transversality.}
These (harmonic) polynomials satisfy the Sturmian property (important for applications) in the sense that
 each polynomial $\psi_{m}^*(z)$ has precisely $m$ {\em transversal}
 zeros. 
 For Hermite polynomials, this result
  was proved by Sturm already in 1836
  \cite{St}; see further historical comments in
  \cite[Ch.~1]{GalGeom}.

 \vspace{0.2cm}

\noi{\bf Remark:
about analyticity.} Obviously, we exclude,
in the first line of \ef{pol111}, the first eigenfunction
$\psi^*_0(z) \equiv \psi^*_{0,1}(z) \equiv 1$, since it does not
vanish and has nothing to do with a multiple zero formation.
However, for $\l=0$ in \ef{Pen.1}, there exists another obvious
bounded analytic solution having a single zero:
 \be
 \label{001}
 (1+z^2) (\psi^*)''+ 2z (\psi^*)'=0 \LongA \tilde\psi^*(z)= \tan^{-1}z \to \pm \pi/2 \asA z \to \pm \iy.
 \ee
This $\tilde \psi^*(z)$ belongs to any suitable $L^2_\rho$-space
(of polynomials). However, it becomes irrelevant due to another
regularity reason: passing to the limit in the corresponding
expansion of $u(x,y) \equiv w(y,\t)$ \ef{YYY.112S} as $\t \to
+\iy$ ($y \to -0$) yields the discontinuous limit ${\rm sign}\,x$,
i.e. an impossible trace at $y=0$ of any analytic solutions of
the Laplace equation.

\subsection{Nonexistence result for crack problem
}

Next we ascertain how the family of admissible cracks should lead to the existence of solutions for the crack problem \eqref{Lap1}.

We have that sufficiently ``ordinary" polynomials are
always complete in any reasonable weighted $L^2$ space, to say
nothing about the harmonic ones; see \cite[p.~431]{KolF}.
Moreover, since our  polynomials are not that different from
standard harmonic (or Hermite) ones, this implies the completeness
in such spaces. So that, sufficiently regular solutions of
\ef{www.2}
should admit the corresponding
eigenfunction expansions over the polynomial family pair 
$$\Phi^*=
\{\psi_{l,1}^*,\psi_{l-1,2}^*\},$$ in the following sense. Bearing in mind two discrete
spectra
\ef{ei11}, the general expansion
has the form
 \be
 \label{exp11}
  \tex{
  w(z,\t)= \sum_{(k \ge l)} \eee^{-k \t} [c_k\psi_{k,1}^*(z)+ d_k \psi_{k-1,2}^*(z)],
  }
  \ee
 where two collections of expansion coefficients $\{c_k\}$ and
 $\{d_k\}$,
  depending on boundary
 data on $\O$, are presented. 
 
 We did not need to develop an
 ``orthonormal theory" of our polynomials, which should specify the
 expansion coefficients in \ef{exp11}, for a given solution
 $u(x,y)$ (though specifying all the coefficients declare the
 whole family of $u$ with such cracks at 0). Indeed, dealing with
 orthonormal harmonic polynomials, we just have a standard
 expansion for harmonic functions, and obtain \ef{exp11} by introducing the scaling
 blow-up variables \ef{resc1}.

 Note that as mentioned in the introduction the linear combination \eqref{exp11} arises naturally
from the spectral theory of the operator (in this case the Laplacian, later on the bi-Laplacian).
Indeed, for the Laplacian $u$ is harmonic in $B_1\setminus \G$ and can be decomposed
by homogeneous harmonic functions, here denoted by $\psi_{k,1}^*$ and $\psi_{k-1,2}^*$. Even facing
a difficult regularity problem in $\O\setminus \G$ (at the singularity boundary point) we are in the context analysed in
\cite{AL}, so that
 \[
 \tex{w(z,\t)=  \eee^{-k \t} \psi^*(z),}
 \]
 for an orthonormal basis $\{\psi_{k,1}^*,\psi_{k-1,2}^*\}$ of Hermite-type polynomials eigenfunctions. Hence, we find that
 our solutions are decompositions of the form \eqref{exp11}.

 Moreover, in view of sufficient regularity of ``elliptic
orbits" (via standard interior elliptic regularity), such
expansion is to converge not only in the mean (in $L^2_\rho$,
with an exponentially decaying weight at infinity), but also
uniformly on compact subsets. This allows us now to prove our
result on nonexistence for the crack problem.

\begin{theorem}
\label{Th.2} Let the cracks $\G_1$,...,$\G_m$ in \ef{curv1} be
asymptotically given by $m$ different straight lines \ef{str1}.
Then, the following hold:

{\rm (i)} If all
$\{\a_k\}$ do not coincide with  all $m$ subsequent zeros of any
non-trivial linear combination
 \be
 \label{dmex}
c_l \psi^*_{l,1}(z)+d_l \psi^*_{l-1,2}(z), \quad \mbox{with} \quad
c_l^2+d_l^2 \ne 0 \whereA z= x/(-y),
 \ee
 of two families of (re-written harmonic)
polynomials $\psi_{l_+}^*(z)\equiv  \psi^*_{l,1}(z)$ and $\psi_{l_-}^*(z)\equiv  \psi^*_{l-1,2}(z)$ defined by
\ef{eig00}, \ef{co1} for any $l=m,m+1,...$ and arbitrary constants
$c_l, \, d_l \in \re$, then the multiple crack problem \ef{Lap1}
cannot have a solution for any boundary Dirichlet data $f$ on
$\O$.

{\rm (ii)} If, for some $l$, the distribution of zeros in {\rm
(i)} holds and a solution $u(x,y)$ exists,
 then
 \be
 \label{as11ex}
  \tex{
  |u(x,y)| = O(|x,y|^{l})    \asA (x,y) \to (0,0).
  }
  \ee

 \end{theorem}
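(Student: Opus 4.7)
The strategy is to transplant the crack conditions into the rescaled variables and then extract the leading asymptotic mode of the expansion \ef{exp11}.

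First I would translate the geometric input. Under the rescaling \ef{resc1}, each asymptotically straight crack $\G_k:\ x=\a_k(-y)(1+o(1))$ corresponds to the vertical line $z=\a_k$ in the $(z,\t)$-plane (up to terms that vanish as $\t\to+\iy$), so the homogeneous Dirichlet condition \ef{Dir1} becomes
\be
\label{cracktrans}
 w(\a_k,\t)\longrightarrow 0 \asA \t\to+\iy,\quad k=1,\dots,m.
\ee
Interior elliptic regularity and the completeness of the pair $\{\psi_{k,1}^*,\psi_{k-1,2}^*\}$ (the harmonic polynomials re-ordered by the pencil) justify the expansion \ef{exp11}, convergent uniformly on compact $z$-subsets. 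Let $l\ge m$ be the minimal index for which $(c_l,d_l)\ne(0,0)$ (if no such $l$ exists, the expansion is trivial and so $u\equiv 0$, contradicting the existence of non-trivial boundary data $f$).

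Next I would isolate the leading mode. For this minimal $l$, as $\t\to+\iy$, the expansion \ef{exp11} is dominated by the single term $\eee^{-l\t}P_l(z)$, where
\[
 P_l(z):=c_l\psi_{l,1}^*(z)+d_l\psi_{l-1,2}^*(z),\qquad c_l^2+d_l^2\ne0,
\]
with the remainder $O(\eee^{-(l+1)\t})$ uniformly on compact $z$-sets. Evaluating \ef{cracktrans} at $z=\a_k$ and multiplying by $\eee^{l\t}$, the remainder still vanishes as $\t\to+\iy$, so we are forced to have $P_l(\a_k)=0$ for every $k=1,\dots,m$. Since $P_l$ has degree exactly $l$ (the leading coefficient of $\psi_{l,1}^*$ does not collapse) and the $\a_k$ are pairwise distinct by \ef{str1}, this gives $m$ prescribed zeros of a specific non-trivial linear combination $P_l$. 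If the hypothesis of (i) holds, no such $(c_l,d_l)\ne(0,0)$ and no such $l\ge m$ can exist, which contradicts the minimality of $l$. Hence the expansion must be identically zero, i.e. $w\equiv 0$, which is incompatible with the prescribed non-trivial boundary data $f$; this proves (i).

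For (ii), if the zero-matching condition is realised for some minimal $l$, then the expansion \ef{exp11} begins precisely at order $\eee^{-l\t}$, uniformly on compact $z$-sets. Returning to the original variables via $\eee^{-\t}=(-y)$, this yields
\[
 |u(x,y)|\le C\,(-y)^l\,\bigl(|P_l(z)|+O(-y)\bigr),\qquad z=x/(-y),
\]
valid in a cone-type neighbourhood of the origin in $\{y<0\}$. Within the admissible region between the extreme cracks, $|z|=|x|/|y|$ stays bounded by $\max_k|\a_k|+o(1)$, so $|P_l(z)|$ is bounded and $(-y)\sim|(x,y)|$. This gives \ef{as11ex}; the case $y>0$ is symmetric.

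The main subtlety I anticipate is the rigorous extraction of the leading term: one needs that the remainder in \ef{exp11} really is $O(\eee^{-(l+1)\t})$ uniformly on compact $z$-subsets including the points $z=\a_k$. This follows from the completeness/convergence in $L^2_\rho$ plus interior elliptic regularity for $w$ (which upgrades $L^2_\rho$ convergence to locally uniform convergence with analogous control on derivatives), but writing it out cleanly is the only non-trivial point; everything else is an algebraic matching of leading coefficients against the nodal set of $P_l$.
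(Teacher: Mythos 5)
Your proposal is correct and follows essentially the same route as the paper: blow-up scaling, the eigenfunction expansion \eqref{exp11}, extraction of the leading mode $\eee^{-l\t}[c_l\psi_{l,1}^*+d_l\psi_{l-1,2}^*]$, and matching the crack slopes $\a_k$ against the nodal set of that combination, with part (ii) obtained by undoing the scaling $\eee^{-\t}=(-y)$. You merely spell out a few steps the paper leaves implicit (multiplying by $\eee^{l\t}$ and passing to the limit, the boundedness of $z$ in the crack cone, and the unique-continuation-type step when all coefficients vanish), which is consistent with the paper's argument.
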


 \begin{proof} Condition \ef{str1} implies that the elliptic
 ``evolution" problem while approaching the origin actually occurs on
 compact, arbitrarily large subsets for $ x/(-y) \equiv z$.
 Since we have converted the singularity point at $(0,0)$ into an
 asymptotic point when $\tau \to \infty$.

 Therefore, \ef{exp11} gives all possible types of such a decay.
 Hence, choosing the first non-zero expansion coefficients $c_l$, $d_l$ in
 \ef{exp11}, that satisfies $c_l^2+d_l^2 \ne 0$, we obtain a sharp asymptotic behaviour of this solution
  \be
  \label{st.2}
  w_l(y,\t)=  \eee^{-l \t}[c_l \psi_{l,1}^*(z)+d_{l} \psi_{l-1,2}^*] + O(\eee^{-(l+1) \t})
  \asA \t \to +\iy.
  \ee
  Obviously,  
then the straight-line cracks \ef{str1} correspond to zeros of
the linear combination 
$$\tex{c_l \psi_{l,1}^*(z)+d_{l}
\psi_{l-1,2}^*(z)},$$ 
and the full result is straightforward since by the blow-up scaling if all the $\a_k$ do not 
coincide with zeros of the previous linear combination \eqref{dmex} (harmonic polynomials) the crack problem 
does not have a solution, since
$$ z=\frac{x}{-y}=\a_k (1+o(1)), \,\, y \to 0.$$
 Otherwise, if there is some $l$ for which all the $\a_k$
coincide with zeros of \eqref{dmex} we find that the crack problem \eqref{Lap1} possesses a solution and \eqref{as11ex} is satisfied.
  The proof is complete. \quad
\end{proof}

\begin{remark}
Remember that thanks to the rescaling \eqref{resc1}, we have converted
the singularity point at $0$ into an asymptotic convergence
when $\tau \to \infty$.
\end{remark}

\begin{remark} 
Of course, one can ``improve" such nonexistence results. For
instance, if cracks have an asymptotically small ``violation" of
their straight line forms near the origin, which do not correspond to
the exponential perturbation in \ef{st.2} (if $c_{l+1}$ and
$d_{l+1}$ do not vanish simultaneously; otherwise take the next
non-zero term), then the crack problem is non-solvable.
\end{remark}

Overall, we can state the following most general conclusion. 

\begin{corollary}
For almost every straight-line crack \ef{curv1}, the crack problem
\ef{Lap1} cannot have a solution for any Dirichlet data $f$,
provided that the crack behaviour at the origin is not consistent
with all the eigenfunction expansions \ef{exp11} via the above
(harmonic) polynomials.
\end{corollary}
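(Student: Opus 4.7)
The plan is to view the Corollary as a measure-theoretic (genericity) refinement of Theorem \ref{Th.2}(i). The analytic content---nonexistence of a solution when the slopes $\alpha_k$ avoid the polynomial zero sets---is already contained in Theorem \ref{Th.2}; what remains is to show that the set of ``exceptional'' slope tuples $\boldsymbol{\alpha}=(\alpha_1,\ldots,\alpha_m)$ that fail the avoidance condition is Lebesgue-negligible inside the parameter space $\{\alpha_1<\cdots<\alpha_m\}\subset\re^m$.

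First, I would fix a level $l\geq m$ and consider the one-parameter family of admissible polynomials
\[
P_{l,t}(z):=\psi^*_{l,1}(z)+t\,\psi^*_{l-1,2}(z),\qquad t\in\re,
\]
together with the limiting case $P_{l,\infty}(z):=\psi^*_{l-1,2}(z)$, so that $t\in\re\cup\{\infty\}\cong S^1$ projectively parametrizes the pencil $\{c_l\psi^*_{l,1}+d_l\psi^*_{l-1,2}:(c_l,d_l)\ne(0,0)\}$. For each such $t$, $P_{l,t}$ has at most $l$ distinct real roots; by the implicit function theorem these roots depend real-analytically on $t$ away from the (algebraic) discriminant locus, and at the finitely many discriminant values they merge or split with finite algebraic multiplicity (Puiseux branches). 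Consequently, the set
\[
\mc{E}_l:=\bigl\{\,\boldsymbol{\alpha}\in\re^m:\ \alpha_1,\ldots,\alpha_m\ \text{are $m$ distinct real roots of}\ P_{l,t}\ \text{for some}\ t\in S^1\,\bigr\}
\]
is the image, under a finite-to-one real-analytic map, of the $1$-dimensional parameter space $S^1$; hence $\mc{E}_l$ is a finite union of real-analytic arcs in $\re^m$.

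Then I would form the countable union $\mc{E}:=\bigcup_{l\geq m}\mc{E}_l$. For $m\geq 2$, each $\mc{E}_l$ is a $1$-dimensional real-analytic subset of $\re^m$ and hence Lebesgue-null, so $\mc{E}$ itself is Lebesgue-null. For $m=1$, $\mc{E}$ is a countable union of countable sets (the real roots of each $P_{l,t}$, as $t$ varies in $S^1$, trace out at most finitely many analytic branches), hence at most countable. In either case $\mc{E}$ has Lebesgue measure zero in $\re^m$. By Theorem \ref{Th.2}(i), every $\boldsymbol{\alpha}\notin\mc{E}$ produces a straight-line crack configuration \eqref{str1} which is inconsistent with all eigenfunction expansions \eqref{exp11}, and therefore the crack problem \eqref{Lap1} admits no solution for any Dirichlet datum $f$. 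This is precisely the Corollary.

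The only nontrivial obstacle is the dimension bookkeeping in the second paragraph: one must justify that, as $t$ varies, the real roots of $P_{l,t}$ cannot sweep out a positive-measure subset of $\re^m$. This rests on the uniform bound $\#\{\text{real roots of }P_{l,t}\}\leq l$ together with local analyticity of roots via the implicit function theorem (and Puiseux expansions at discriminant values), ensuring that the parametrization of unordered $m$-subsets of roots factors through a finite-to-one real-analytic map from the $1$-dimensional parameter $S^1$ onto $\re^m$. Granted this, everything else reduces to the elementary facts that a finite union of analytic arcs has Lebesgue measure zero in $\re^m$ for $m\geq 2$, and that a countable union of null sets is null.
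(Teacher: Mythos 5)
The paper offers no separate proof of this Corollary: it is stated as an immediate rephrasing of Theorem \ref{Th.2}(i), with the ``almost every'' supported only by the informal dimension count in the Introduction (admissible crack configurations form a countable family of one-dimensional subsets, one free parameter $d_l/c_l$ per level $l$). Your proposal tries to upgrade that heuristic to a rigorous measure-theoretic statement, and for $m\ge 2$ it is essentially sound: for fixed $l$ the exceptional tuples lie in the images of finitely many maps, analytic on the finitely many subintervals of the projective parameter between discriminant values, from a one-dimensional parameter set into $\re^m$, so each $\mc{E}_l$ is at most one-dimensional and Lebesgue-null, and the countable union over $l\ge m$ is null; combined with Theorem \ref{Th.2}(i) this gives the asserted generic nonexistence. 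Note, though, that the Corollary as written is conditional (``provided that the crack behaviour \ldots is not consistent with the expansions \eqref{exp11}''), so its nonexistence assertion already follows from Theorem \ref{Th.2}(i) with no measure theory; what you are proving is the stronger, unconditional genericity claim.

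The genuine gap is your $m=1$ case. The set of real roots of $P_{l,t}$, as $t$ ranges over $S^1$, is not countable: the roots depend continuously on $t$ and sweep out intervals, not isolated points, so ``finitely many analytic branches'' does not give a countable set. Already at $l=1$ the combination $c_1\psi^*_{1,1}(z)+d_1\psi^*_{0,2}(z)=c_1 z+d_1$ has zero $z=-d_1/c_1$, which takes every real value as $(c_1,d_1)$ varies; hence $\mc{E}_1=\re$ and $\mc{E}=\re$ for $m=1$. This is consistent with the geometry: a single straight line through the origin is always the nodal line of a first-degree harmonic polynomial, so a single straight crack is always admissible, and the unconditional ``almost every slope gives nonexistence'' statement you derive is false for $m=1$. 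Your argument therefore establishes the Corollary only for $m\ge 2$; for $m=1$ you must either retreat to the conditional reading of the statement (which is just Theorem \ref{Th.2}(i)) or exclude that case explicitly.
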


\ssk

Finally, concerning the admissible boundary data for such
$l$-cracks at the origin, these are described by all the
expansions \ef{exp11} with arbitrary expansion coefficients
excluding the first ones $c_l$, $d_l$, which are fixed by the
multiple crack configuration (up to a common non-zero multiplier)
and satisfying $c_l^2+d_l^2 \ne 0$.


 \subsection{Extensions to semi-linear equations: a regular perturbation}

With the idea in mind of extending the techniques performed above to non-linear problems
we show a couple of examples. Especially interesting is the application of pencil operators for non-linear equations 
since in most cases this creates problems. See for example \cite{CGpLap}. 

As a key explaining example, consider the semi-linear Laplace
equation
 \be
 \label{sem1}
 \D u + |u|^{p-1}u=0  \whereA p>1.
 \ee
One can see that performing the same rescaling \ef{resc1} on \ef{sem1}, in
view of \ef{www.20}, will lead to the following exponentially
small perturbation of the rescaled Laplacian one: as $\t \to
+\iy$,
 \be
 \label{sem2}
 \big[ D_\t^2+ D_\t+ 2 z D^2_{z \t} +
(1+z^2)D_z^2 +2z D_z\big]w  + \eee^{-2 \t} |w|^{p-1}w=0.
 \ee
Obviously, then, on any leading asymptotic pattern given by stable
subspaces in \ef{exp11}, the last nonlinear term in \ef{sem2} is
negligible, so cannot affect the types of decay patterns at the
origin.

\subsection{Extensions to semi-linear equations: a singular perturbation}

It is seen from the previous example that in order to involve the
nonlinear term in a formation of multiple zeros at the origin, it
must be singular nearby, which happens for this model:
 \be
 \label{sem3}
  \tex{
 \D u + \frac{|u|^{p-1}u}{x^2+y^2}=0 \quad (p>1).
 }
  \ee
  Then by the same rescaling, instead of \ef{sem2},
  one obtains the following operator
 \be
 \label{sem4}
  \tex{
 \big[ D_\t^2+ D_\t+ 2 z D^2_{z \t} +
(1+z^2)D_z^2 +2z D_z\big]w  + \frac{ |w|^{p-1}w}{1+z^2}=0,
 }
 \ee
so that the non-linear term does not have an exponentially decaying
multiplier such as in \ef{sem2}.

\ssk

\noi{\sc Stationary profiles.} Firstly, it is straightforward to
consider bounded {\em stationary} solutions of \ef{sem4}:
 \be
 \label{sem41}
  \tex{
 w(z,\t)=f(z) \LongA
(1+z^2) f'' +2z f'  + \frac{ |f|^{p-1}f}{1+z^2}=0 \inB \re.
 }
 \ee
 In order to pose necessary conditions at $z=\iy$, consider the
 operator linearised at $f=0$, $z=\iy$, that yields the following
 roots of the characteristic equation:
  \be
  \label{sem411}
  \tex{
  (1+z^2) f'' + 2 z f'=0, \quad f=z^m \LongA m^2+m=0 \LongA m_1=-1,
  \,\,\, m_2=0,
  }
  \ee
  evaluating again the higher order terms. Therefore, we first consider \ef{sem41} with the following conditions
 as $z \to \iy$,
  \be
  \label{sem42}
   \tex{
  f(z)=O(\frac 1z);
   \quad f'(0)=0 \,\,\,(\mbox{symmetry}) \,\,\, \mbox{or}\,\,\, f(0)=0
   \,\,\,(\mbox{anti-symmetry}).
   }
   \ee
 Thus, with $m_1=-1$ the last
condition in \ef{sem41} corresponds to ``dipole-like" profiles.
Both symmetry and anti-symmetry conditions are associated with the
fact that  the ODE \ef{sem41} is invariant under the reflection
 \[
 z
\mapsto -z, \quad f \mapsto -f,
 \]
 which allows us to extend solutions for $z>0$ to $\{z<0\}$ in
 symmetric or anti-symmetric ways.
Of course, stationary nonlinear eigenfunctions \ef{sem41}
correspond to usual straight-line nodal sets.

A symmetric stationary profile $f(z)$ satisfying \ef{sem41} is
shown in Figure \ref{Fsymm1} for the cubic case $p=3$. In Figure
\ref{Fdip1}, we show a dipole-like profile as a solution of the
ODE in \ef{sem41}, again, for the cubic nonlinearity with for
$p=3$.


 \begin{figure}
\centering
\includegraphics[scale=0.75]{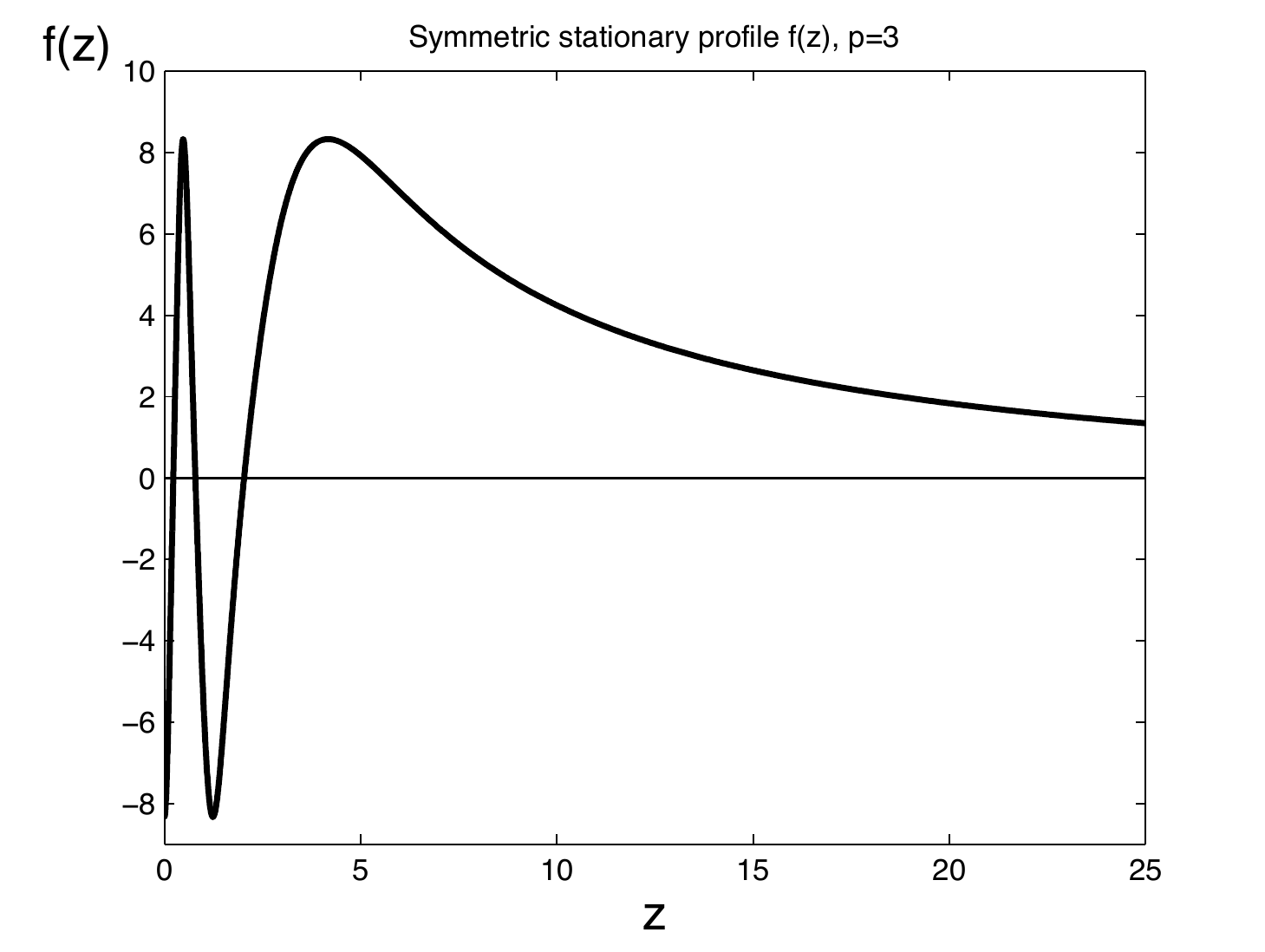}  
\vskip -.3cm
  \caption{An example of a symmetric bounded stationary  self-similar solution $f(z)$ of \ef{sem41} for $p=3$.}
 \label{Fsymm1}
\end{figure}



 \begin{figure}
\centering
\includegraphics[scale=0.75]{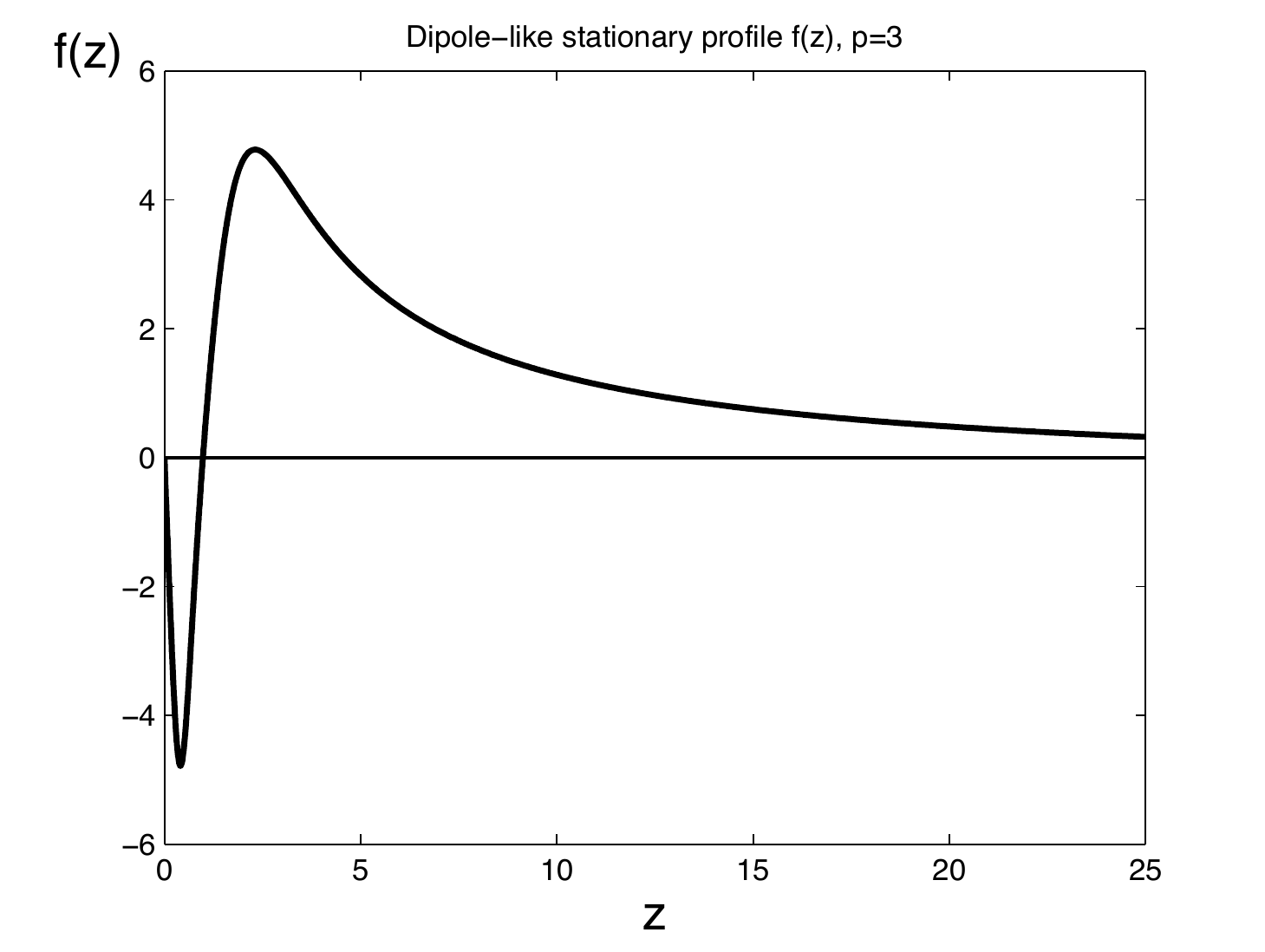}  
\vskip -.3cm
  \caption{An example of a dipole-like  stationary  self-similar solution $f(z)$ of \ef{sem41} for $p=3$.}
 \label{Fdip1}
\end{figure}


Also, the second root $m_2=0$ in \ef{sem411}
allows us to consider stationary profiles satisfying
 \be
 \label{sem43}
 f(+\iy)=1.
  \ee
  Figure \ref{Fsd1} shows that such profiles exist for $p=3$, for
  both symmetric and dipole-like (the dash-line) cases. Overall,
  those examples exhibit a vast variety of nonlinear
  eigenfunctions with different nodal sets for elliptic equations
  with singular nonlinear perturbations.


 \begin{figure}
\centering
\includegraphics[scale=0.75]{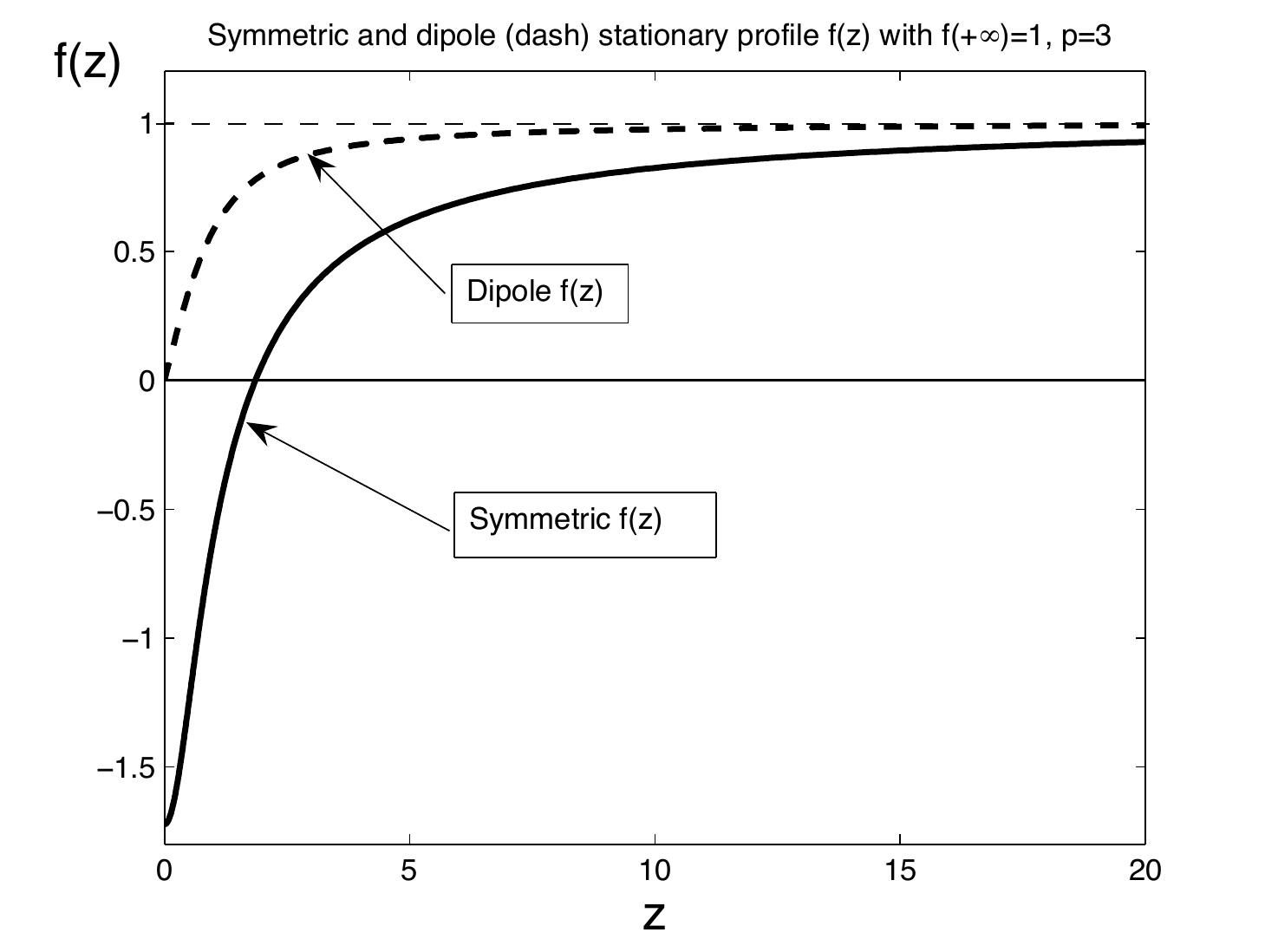}  
\vskip -.3cm
  \caption{Example of symmetric and dipole-like  stationary  self-similar solution $f(z)$ of \ef{sem41}, \ef{sem43} for $p=3$.}
 \label{Fsd1}
\end{figure}


\ssk

\noi{\sc Quasi-stationary self-similar solutions.}
 Secondly, as other  ``nonlinear
eigenfunctions" depending on $\t$, we can look for an {\em
approximate} self-similar solution of a standard form:
 \be
 \label{sem5}
  \tex{
 w(z,\t)= \t^\a f(\xi) \whereA \xi= \frac{z}{\t^\b} \whereA \b=
 \frac {\a(p-1)}2,
 }
 \ee
 and $\a>0$ is an arbitrary fixed exponent. It is clear that the
 evolution structure \ef{sem5} is {\em quasi-stationary}, since
 all three first time-dependent derivatives, after scaling, are negligible as $\t
 \to +\iy$, of the order, at least, $\sim O(\frac 1 \t)$, in comparison with the three other stationary ones. Then, the
 ODE for $f$ asymptotically takes the form (cf. that in \ef{sem41})
  \be
  \label{sem6}
   \tex{
     \xi^2 f''+ 2 \xi f' + \frac{|f|^{p-1}f}{\xi^2}=0,
    }
     \ee
(note that $\a$ does not affect this equation). One can see that
\ef{sem6} admits solutions with the same decay at infinity:
 \be
 \label{sem7}
  \tex{
   f(\xi)= O(\frac 1 \xi)\to 0 \asA \xi \to \iy.
   }
   \ee
   At $\xi=0$, the operator is singular, so one cannot put any
   definite condition on it, and we just require  $f$ to be
   bounded. Again, we are not going to study this ODE problem in
   any detail. In Figure \ref{Fx21} we just present such a
   self-similar profile for $p=3$. Note that it is oscillatory as
   $\xi \to 0$, so that the nodal set of such an unbounded
   ($\a>0$) pattern consists of an infinite number of zero
   curves, with the following non-standard behaviour near the origin
   (cf. \ef{str1}: here, there is a $\log$-type perturbation of the crack geometry
   for such nonlinear patterns):
    \be
    \label{sem8}
     \tex{
     x_k= \xi_k \, (-y) |\ln(-y)|^\b \whereA k=1,2,3,..., \quad
     f(\xi_k)=0.
      }
      \ee
Indeed, this is a rather strange 
example of a
multiple crack (while the solution gets unbounded at 0), but one
should remember that, here, we are talking about a {\em strongly
singularly} perturbed Laplace operator \ref{sem3}, for which a
proper statement of the Dirichlet problem deserves certain
attention.


 \begin{figure}
\centering
\includegraphics[scale=0.75]{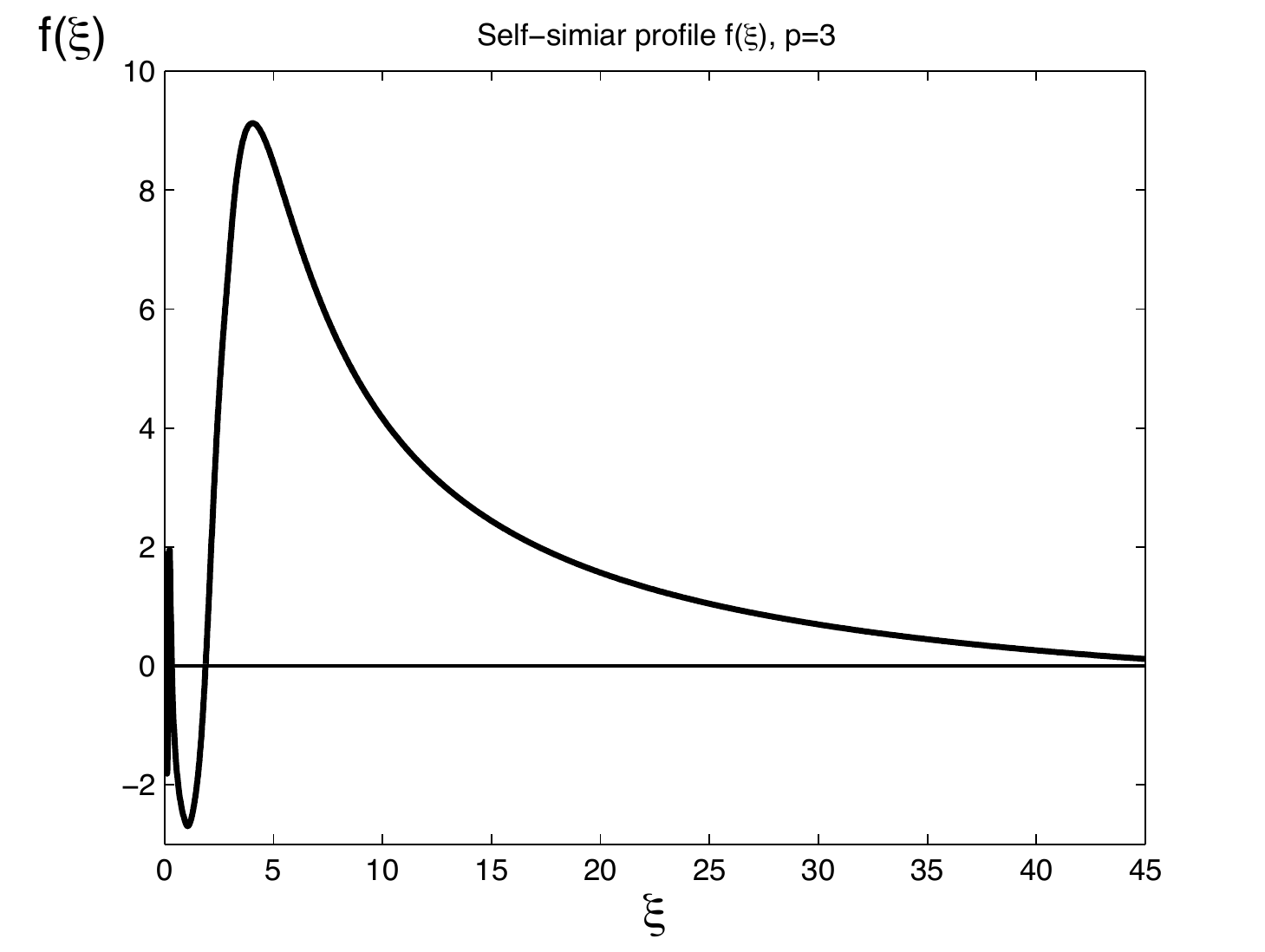}  
\vskip -.3cm
  \caption{An example of a bounded oscillatory self-similar solution $f(\xi)$ of \ef{sem6}, \ef{sem7} for $p=3$.}
 \label{Fx21}
\end{figure}



\subsection{A comment: a standard Sturm--Liouville form of the pencil}
\label{stlio}


Recall the classic fact: harmonic polynomials are eigenfunctions
of a standard Sturm--Liouville problem. Therefore, obviously, our
pencil eigenvalue problem must admit a reduction to a similar one.
It is easy to see that, e.g., this can be achieved by the
transformation
 \[
 \psi^*(z)=(1+z^2)^\g \varphi(z),
  \]
  with a
parameter $\g \in \re$ to be determined. Then  we find that the
operator \eqref{Pen.1} can be written as
\be
\label{stuli}
\begin{split}
 (1+z^2)^\g [ \l (\l+1)\varphi &
 +4(\l+1) \g z^2 (1+z^2)^{-1}\varphi +2(\l+1) z \varphi'+2\g \varphi  \\ &+
4z^2\g(\g-1)(1+z^2)^{-1} \varphi  +4z \g \varphi'
+(1+z^2)\varphi'']=0,
\end{split}
\ee
since
\[
(\psi^*)'(z)=2\g z (1+z^2)^{\g-1}\varphi(z)+(1+z^2)^{\g}\varphi'(z),
 \quad \mbox{and}
 \]
\[
(\psi^*)''(z)= 2\g  (1+z^2)^{\g-1}\varphi(z)+4\g (\g-1) z^2
(1+z^2)^{\g-2}\varphi(z)+ 4\g z (1+z^2)^{\g-1}\varphi'(z)+
(1+z^2)^{\g}\varphi''(z).
  \]
   To eliminate the necessary terms in
order to get a Sturm--Liouville problem, we have to cancel the
term containing $z \var'$, i.e. to require
   \[
 \tex{
  2
  \l+4\g+2\LongA
\g=-\frac{\l+1}{2}.
 }
 \]
  Now, rearranging terms for that specific $\g$ in the equation
\eqref{stuli}, so that the terms with $\varphi$ are given by
 \[
  \tex{\left(1-z^2(1+z^2)^{-1}\right)(\l+1)(\l-1)\varphi \equiv
  (1+z^2)^{-1} (\l+1)(\l-1),
  }
  \]
we arrive at a Sturm--Liouville problem of the form
\be
\label{StumLiv}
 \tex{
\mc{A} \varphi=\mu \varphi,\quad \hbox{where} \quad
\mc{A}=-(1+z^2)^2 \frac{{\mathrm d}^2}{{\mathrm d}z^2} \andA
\mu=(\l+1)(\l-1),
 }
\ee
  in the space of functions
  \[
 \tex{
  \mc{D}=L^2\big(  \re,\frac{{\mathrm d}z}{(1+z^2)^2}\big).
 }
  \]
  The
operator $\mc{A}$ is symmetric in a weighted $L^2$-space, so the
eigenvalues $\mu$ are real. Indeed, by classic Sturm--Liouville
theory, we also state that there exists an eigenfunction
associated with every eigenvalue $\mu_n$ such that
\[\mu_1<\mu_2<\cdots<\mu_n \to \infty.\] 
Associated with those
eigenvalues we have the eigenfunctions $\varphi_n$ which have exactly
$n-1$ zeros in $\re$ and are the so-called $n$-th fundamental
solution of the Sturm--Liouville problem \eqref{StumLiv} and form
an orthogonal basis in a specific weighted $L^2$-space, denoted by
$L^2_\rho$ for an appropriate weight (in this case
$\rho=(1+z^2)^{-2}$). Indeed, by classical spectral theory we can be
assured that the first eigenvalue $\mu_1$ is positive and, hence
all the others. Also, since the weight $\rho$ is integrable, i.e.
\[\tex{\int_{\re} \frac{{\mathrm d}z}{(1+z^2)^2} <\infty,}\]
by classical spectral theory we can confirm that the spectrum is formed by a 
discrete family of eigenvalues. 
Thus,
 our pencil eigenvalues are associated with standard $\mu$'s via
 the quadratic algebraic equation
 \[
 \mu=(\l+1)(\l-1),
 \]
 and the correspondence of eigenfunctions
 is straightforward. We do not need any further discussion, since, inevitably,
  once more, we are starting to re-discover classic textbook's facts on harmonic
 polynomials.

\section{Bi-Laplace equation and new types of admissible cracks}
 \label{S9}

According to \ef{www.20}, for the bi-Laplace problem \ef{bila}, we
need to solve the iterated rescaled Laplacian:
 \be
 \label{9.1}
  \D_{(z,\t)} \D_{(z, \t)} w=0.
   \ee
   As mentioned in previous sections, the admissible crack distributions obtained
    for the Laplace equation will remain valid for
the bi-Laplace one \ef{bila}, \ef{Dir1}, having also other types
of singularities at the origin.

\subsection{Regularity via Hermitian spectral theory for a pencil}
 \label{S3}


\noindent  We now obtain a type of pencil operator needed to
tackle the problems under analysis in this paper. Also,
we shall perform our analysis on the basis of a non-self-adjoint
spectral pencil theory previously unknown and, probably, one of
the reasons these results could not be obtained before.
Indeed, eventually, we will put in charge a wider family of
harmonic polynomials, which is not that surprising.

\vspace{0.2cm}

\noindent\underline{Blow-up scaling}. 
Firstly, we perform  the same ``blow-up" scaling near the origin
$0$ for the bi-Laplace equation \eqref{resc1}, which was done
before for the Laplace equation \eqref{Lap1}. 

Remember via the rescaling \eqref{scal} we have transformed the singularity point at the origin 
to an asymptotic convergence when $\tau \to \infty$, as performed for the Laplace equation.   

Thus, thanks to 
operator \eqref{www.20} we get the rescaled one,
\be
\label{Bilap1}
\begin{split}
\D_{(z,\t)} \D_{(z, \t)} w & = {\mathrm e}^{2\tau}[ D_\tau^4+6
D_\tau^3+11D_\tau^2+6 D_\tau]w
\\ & + {\mathrm e}^{2\tau}[ 44zD_{z\tau}^2+24z D_{z\tau\tau}^3+10(1+3z^2)D_{zz\tau}^3]w +
\\ & + {\mathrm e}^{2\tau}[
4zD_{z\tau\tau\tau}^4+ 2(1+3z^2)D_{zz\tau\tau}^4+4z(1+z^2)D_{zzz\tau}]w
\\ & +
{\mathrm e}^{2\tau}[ (1+z^2)^2D_{z}^4 + 12z(1+z^2)D_{z}^3 +
12(1+3z^2)D_{z}^2 + 24 z D_{z}]w=0.
\end{split}
\ee
Therefore, we arrive at the
equation
 \be
 \label{wwpen}
 \begin{split}
  w_{\t\t\t\t} +6w_{\t\t\t}+11w_{\t\t} & + 6 w_\t + 44 z w_{z  \t}+ 24z   w_{z  \t\t}+ 10(1+3z^2)  w_{z z \t}+ 4z  w_{z  \t\t\t}
  \\ & +2 (1+3z^2) w_{z z \t\t}+ 4z(1+z^2)  w_{z z \t\t}= {\bf C}^* w,
 \end{split}
   \ee
 where the operator ${\bf C}^*$ stands for
 \[{\bf C}^* w \equiv -(1+z^2)^2 w_{z zzz} - 12z(1+z^2) w_{z  zz} - 12(1+3z^2) w_{z  z} - 24 z w_{z  }.\]
Now, as for the Laplace equation we are looking for solutions such that
 \be
  \label{zero}
   w(z,\tau) \to 0 \asA \t \to +\iy .
    \ee
    
    \vspace{0.2cm}

\noindent\underline{Pencil operator}. 
Again, thanks to Kondratiev's ``evolution" approach, we will show
that also for the bi-Laplace equation \eqref{bila}, with the
multiple crack condition \eqref{curv1} under consideration, we
need  polynomial eigenfunctions of certain pencil operators. To do
so, we write the solutions of \eqref{wwpen} in separate variables
 \[ w(z , \t)= {\mathrm e}^ {\l \t} \psi^*(z ) \whereA {\rm Re} \, \l
 <0 \,\,\, \mbox{by (\ref{zer1})},
 \]
  $\l$ stand for the eigenvalues of the adjoint operator ${\bf
 C}^*$,
and $\psi^*$ the corresponding eigenfunctions,
 arriving at an eigenvalue problem for a {\em polynomial $($quartic$)$ pencil}
of non self-adjoint operators of the form
 \be
 \label{BiPen.1}
 \begin{split}
 {\bf F}_\l^* \psi^* \equiv  & \{(\l^4+6\l^3+11\l^2+6\l)I+ 4 (\l^3+6\l^2+11\l) zD_z
 \\ & +2(1+3z^2)(\l^2+5\l)D_z^2+4\l(1+z^2)z D_z^3  - {\bf C}^*\} \psi^*=0.
 \end{split}
   \ee
   
   \vspace{0.2cm}

\noindent{\bf Remark.} The  fourth-order operator ${\bf C}^*$ is singular  at the
infinite points $z= \pm \infty$, so this is a singular pencil
eigenvalue problem. In this case, we also have that the operator
is not symmetric (similarly to the Laplace equation: see Remark\;\ref{re34}), since, for instance, the linear first-order
operator in \ef{BiPen.1},  $z D_z$, is not symmetric in $L^2$. 

One can 
see that introducing any weighted $L^2_\rho$ metric
does not help either. Indeed, a single weight function $\rho(z)$ is not
enough to arrange a symmetry balance. Thus, a symmetry feature is
not crucial at all for a functional setting to be used, though the
quality of particular functional spaces to be used remains
essential for eigenvalue analysis. In particular, the
analyticity properties/conditions obviously remain valid for
the bi-Laplace equation, so that, for finite-order zeros at 0,
harmonic polynomials must appear again.

\vspace{0.2cm}

\noindent\underline{Polynomial eigenfunctions and families of eigenvalues}. 
Similarly, as we obtained for the Laplace equation in
Proposition\;\ref{Pr.PolP}, we have that the eigenfunctions of the
\emph{adjoint pencil} \eqref{BiPen.1} are finite polynomials (cf.
the above analyticity demand). 

Moreover, we can state the following.
\begin{lemma}
The pencil operator \eqref{BiPen.1} admits four families of eigenfunctions 
\be
\label{eig47}
\{\psi_{l,1}^*(z)\},\quad \{\psi_{l,2}^*(z)\},\quad \{\psi_{l,3}^*(z)\},\quad \{\psi_{l,4}^*(z)\},
   \ee
   associated with four families of eigenvalues of the form
    \be
 \label{biei11}
 \begin{split} & \tex{
  \l_{l,1}=-l, \quad l=1,2,3,... \qquad \quad \l_{l,2}=-l-1, \quad
  l=0,1,2,3,...} \\ &  \tex{\l_{l,3}=-l-2, \quad
  l=0,1,2,3,... \andA  \l_{l,4}=-l-3, \quad
  l=0,1,2,3,... \,
  }
  \end{split}
  \ee
\end{lemma}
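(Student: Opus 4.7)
My plan mirrors the Laplace-case computation (Theorem~3.1) and exploits a structural factorization inherited from $\Delta^2 = \Delta \circ \Delta$. First, I would observe that every operator appearing in $\mathbf{F}_\lambda^*$ -- the identity, $zD_z$, $(1+3z^2)D_z^2$, $(1+z^2)zD_z^3$, and each of the four summands of $\mathbf{C}^*$ -- preserves the parity of polynomial degree in $z$. Accordingly, I search for eigenfunctions of the form
\[
\psi_l^*(z) = z^l + a_{l-2}\,z^{l-2} + a_{l-4}\,z^{l-4} + \cdots, \qquad a_l = 1,
\]
in exactly the same shape as \eqref{eig00}, with all-even or all-odd exponents according to the parity of $l$.

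The central observation is the factorization
\[
\mathbf{F}_\lambda^* = \mathbf{B}_{\lambda+2}^* \, \mathbf{B}_\lambda^*,
\]
where $\mathbf{B}_\lambda^*$ is the Laplace pencil \eqref{Pen.1}. This comes from applying the rescaling \eqref{www.20} twice: the second Laplacian acts on $\Delta_{(z,\tau)}w$ already weighted by the conformal factor $e^{2\tau}$, and the commutation identity
\[
\Delta_{(z,\tau)}\bigl[e^{2\tau} g\bigr] \;=\; e^{2\tau}\bigl(\Delta_{(z,\tau)} + 4 D_\tau + 6 I + 4 z D_z\bigr) g
\]
shows that, on $e^{\lambda\tau}$-modes, the bracketed operator restricts exactly to $\mathbf{B}_{\lambda+2}^*$ (the shift $\lambda \mapsto \lambda+2$ being the spectral trace of $e^{2\tau}$). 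I would verify this identity either by this conceptual route or by direct symbolic expansion of $\mathbf{B}_{\lambda+2}^*\mathbf{B}_\lambda^*$ and matching against the eight displayed terms of \eqref{BiPen.1}.

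With the factorization in hand, the leading $z^l$ coefficient of $\mathbf{F}_\lambda^* \psi_l^*$ is immediate from the Laplace identity $\mathbf{B}_\lambda^* z^l = (\lambda+l)(\lambda+l+1)z^l + l(l-1) z^{l-2}$, yielding
\[
(\lambda+l)(\lambda+l+1)(\lambda+l+2)(\lambda+l+3) = 0.
\]
The four real negative roots $\lambda = -l, -l-1, -l-2, -l-3$, re-parametrized by the polynomial degree $l$, are precisely the four families \eqref{biei11}: the first two coincide with the Laplace spectrum \eqref{ei11} (harmonic polynomials being a fortiori biharmonic), while $\lambda_{l,3}$ and $\lambda_{l,4}$ are the genuinely biharmonic modes contributed by the second factor $\mathbf{B}_{\lambda+2}^*$. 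A consistency check is that at each fixed target level $\lambda = -l_0$ the resulting four polynomials $\psi_{l_0,1}^*, \psi_{l_0-1,2}^*, \psi_{l_0-2,3}^*, \psi_{l_0-3,4}^*$ (of $z$-degrees $l_0, l_0-1, l_0-2, l_0-3$) match the four-dimensional space of homogeneous biharmonic polynomials of degree $l_0$ in $(x,y)$ pushed through the scaling \eqref{resc1}.

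The sub-leading coefficients $a_{l-2}, a_{l-4}, \ldots$ are then determined by projecting $\mathbf{F}_\lambda^* \psi_l^* = 0$ onto lower monomials; this yields a triangular even-step recursion completely analogous to \eqref{co1}, and existence of a nonzero polynomial is routine except at the finite set of resonances where two of the root values $-l,-l-1,-l-2,-l-3$ coincide at different starting degrees, handled by the same Jordan-pair choice as in the Laplace proof. The main obstacle I foresee is the verification of the factorization $\mathbf{F}_\lambda^* = \mathbf{B}_{\lambda+2}^*\mathbf{B}_\lambda^*$: although conceptually forced by $\Delta^2 = \Delta \circ \Delta$, matching the eight explicit summands of \eqref{BiPen.1} against the composite product demands a careful coefficient accounting for $D_z^k$ with $k = 0,1,2,3,4$. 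Once this is done, the rest of the argument is the Laplace-pencil calculation applied twice.
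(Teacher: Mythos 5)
Your argument is correct and reaches the lemma's conclusion, but by a genuinely different computational route than the paper. The paper substitutes the ansatz \eqref{bieifun} directly into \eqref{BiPen.1}, reads off the $O(z^l)$ coefficient to obtain the quartic \eqref{bieig11}, and only then factors it by observing that the Laplace eigenvalues $-l,-l-1$ must be among its roots, leaving a quadratic whose roots are $-l-2,-l-3$. You instead establish the operator factorization $\mathbf{F}_\lambda^*=\mathbf{B}_{\lambda+2}^*\,\mathbf{B}_\lambda^*$, and this identity does hold: composing $\mathbf{B}_\mu^*=(1+z^2)D_z^2+2(\mu+1)zD_z+\mu(\mu+1)I$ with $\mu=\lambda+2$ and $\mu=\lambda$ produces the coefficients $(1+z^2)^2$, $4(\lambda+3)z(1+z^2)$, $2(\lambda+2)(\lambda+3)(1+3z^2)$, $4(\lambda+1)(\lambda+2)(\lambda+3)z$, $\lambda(\lambda+1)(\lambda+2)(\lambda+3)$ for $D_z^4,\dots,I$, which is precisely \eqref{BiPen.1} once $-\mathbf{C}^*$ is absorbed into the displayed terms. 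The leading-order coefficient then appears already factored, $(\lambda+l)(\lambda+l+1)(\lambda+l+2)(\lambda+l+3)=0$, and this quartic coincides with \eqref{bieig11}, giving the four families \eqref{biei11}. What your route buys: no post-hoc factoring of a quartic, and a transparent structural split of the spectrum into the two ``harmonic'' families (kernel of the inner factor $\mathbf{B}_\lambda^*$) and the two genuinely biharmonic ones (solutions of $\mathbf{B}_\lambda^*\psi=\phi$ with $\phi$ an eigenfunction of the shifted pencil); what the paper's route buys is that it never has to verify an operator identity, at the cost of expanding the full characteristic polynomial by hand. Two small caveats: in the paper's notation $\Delta_{(z,\tau)}$ in \eqref{www.20} already carries the factor $\eee^{2\tau}$, so your commutation identity should be stated for the bracketed operator (the substance is unaffected); and your aside about a ``Jordan-pair choice as in the Laplace proof'' is unnecessary — the lemma only asserts the four eigenvalue families, the subleading recursion belongs to Theorem \ref{Th.3}, and no such resonance device is used (or needed) in the Laplace argument.
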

\begin{proof}
To find the
corresponding point spectrum of the pencil we just substitute the
$l$th-order polynomial eigenfunctions \eqref{eig00}
\be
\label{bieifun}
  \tex{
\psi_l^*(z)=z^l+b_{l-2}z^{l-2}+ b_{l-4}z^{l-4}+... =
\sum\limits_{k=l,l-2,...} b_k z^k, \quad (b_l=1),
  }
\ee
into (\ref{BiPen.1}) obtaining the following equation for the
eigenvalues $\l$:
  \be
  \label{bieig11}
  \begin{split}
 O(z^l): \;  \l_l^4 + 2(2l+3) \l_l^3 + (6l^2 +18 l +11)\l_l^2 & +(4 l^3+18l^2+22l+6)\l_l \\ & +l^4 +6 l^3+11l^2+6 l=0.
 \end{split}
   \ee
Subsequently, we solve this characteristic equation ascertaining the corresponding families of eigenvalues. Thus, taking into account that the negative eigenvalues
obtained for the quadratic pencil \eqref{Pen.1}
\[
 \tex{
  \l_l^+=-l, \quad l=1,2,3,... \andA \l_l^-=-l-1, \quad
  l=0,1,2,3,... \,,
  }
 \]
  are going to be solutions of the characteristic equation \eqref{bieig11}, we have that \eqref{bieig11} can be written by
  \[\tex{(\l_l +l)(\l_l +l+1) (\l_l^2 +(2l+5)\l_l +l^2+5l+6)=0.}\]
  Hence, we find four families of negative eigenvalues \eqref{biei11}. 
  \end{proof}
  
   Therefore, calculating  the {\em harmonic
 polynomials} as the corresponding eigenfunctions of the pencil,
 we arrive at.

  \begin{theorem}
  \label{Th.3}
 The fourth-order pencil \ef{BiPen.1} has four discrete spectra
 \ef{biei11} of real negative eigenvalues with the finite polynomial
 eigenfunctions given by \ef{bieifun}, where the expansion
 coefficients satisfy
 finite Kummer-type recursion   corresponding to the operator in
$(\ref{BiPen.1})$:
  \be
  \label{bico1}
   \begin{split}
   & \tex{
    b_{k+4}= -
    \frac{2\l_{l,i} (\l_{l,i}+5) +4\l_{l,i}  k+ 2 k(k-1)+12k + 12}{ (k+4)(k+3)} \,
      b_{k+2}} \\
       &
 \tex{
        - \frac{\l_{l,i}\left[ (\l_{l,i})^3
    +6(\l_{l,i})^2+ 11\l_{l,i}+6\right]+ 4\l_{l,i} \left[(\l_{l,i})^2+6\l_{l,i}+11\right]+6\l_{l,i}(\l_{l,i}+5)k(k-1)
   }{ (k+4)(k+3)(k+2)(k+1)} \,  b_{k},
   }
   \\ &   \tex{- \frac{ 4\l_{l,i} k(k-1)(k-2)+k(k-1)(k-2)(k-3) +12k(k-1)(k-2)+36 k(k-1)+24k
   }{ (k+4)(k+3)(k+2)(k+1)} \,  b_{k},}
   \end{split}
    \ee
    for $k\geq 4$, any $i=1,2,3,4$, and
    \begin{align*}
    & \tex{\l_{l,i} ((\l_{l,i})^3 +6(\l_{l,i})^2+ 11\l_{l,i}+6) b_0 + [4((\l_{l,i})^2+5\l_{l,i})+24] b_2+24 b_4=0,}\\ &
    \tex{\left[(\l_{l,i})^4 +10(\l_{l,i})^3 +17(\l_{l,i})^2 +17 \l_{l,i} +24\right] b_1+12\left[ (\l_{l,i})^2+7\l_{l,i} +12\right] b_3 +120 b_5=0,}\\ &
    \tex{ \left[ (\l_{l,i})^4 +10(\l_{l,i})^3 +47 (\l_{l,i})^2 +110 \l_{l,i} +120\right] b_2+24\left[ (\l_{l,i})^2+9\l_{l,i} +20\right]  b_4 + 360 b_6=0,}\\ &
    \tex{ \left[ (\l_{l,i})^4+10(\l_{l,i})^3 +71(\l_{l,i})^2 +254 \l_{l,i} +460\right] b_3 +240\left[ \l_{l,i}+5\right] b_5 +840b_7=0,}
    \end{align*}
  \end{theorem}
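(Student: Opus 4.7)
The plan is to substitute the polynomial ansatz $\psi_l^*(z)=\sum_{k} b_k z^k$, with $b_l=1$ and $b_k=0$ for $k>l$, directly into the quartic pencil equation ${\bf F}_\l^*\psi^*=0$ from \ef{BiPen.1}, and then read off the recursion by matching like powers of $z$. Since the lemma preceding this theorem already identifies the four admissible families of eigenvalues \ef{biei11} as the roots of the characteristic equation \ef{bieig11}, the task here reduces to deriving the coefficient relations stated in \ef{bico1}.

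First I would tabulate the action of each operator appearing in ${\bf F}_\l^*$ on the monomial $z^k$: $Iz^k=z^k$, $zD_z z^k=k z^k$, $(1+3z^2)D_z^2 z^k= k(k-1) z^{k-2}+3k(k-1)z^k$, $(1+z^2)zD_z^3 z^k= k(k-1)(k-2)z^{k-2}+k(k-1)(k-2)z^k$, and for $-{\bf C}^*$ one expands the four pieces $(1+z^2)^2 D_z^4 z^k$, $12z(1+z^2)D_z^3 z^k$, $12(1+3z^2)D_z^2 z^k$, $24 z D_z z^k$, producing contributions of orders $z^{k-4}$, $z^{k-2}$, and $z^k$. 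Weighting each piece by the corresponding $\l$-polynomial from \ef{BiPen.1} and summing, I then collect like powers of $z$.

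Next, I would extract the coefficient of a generic $z^k$ for $0\le k\le l-4$. This picks up $b_k$ from the $z^k$-type contributions, $b_{k+2}$ from the $z^{k-2}$-shift acting on $b_{k+2}z^{k+2}$, and $b_{k+4}$ from the $z^{k-4}$-shift acting on $b_{k+4}z^{k+4}$. Solving the resulting linear relation for $b_{k+4}$ yields precisely the three-term recursion displayed in \ef{bico1}; in particular, the leading-order coefficient at $z^l$ reproduces \ef{bieig11}, which the lemma has already factored as $(\l+l)(\l+l+1)(\l^2+(2l+5)\l+l^2+5l+6)=0$, giving the four spectra \ef{biei11}. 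Finiteness of the polynomial then follows from the top-end initial data $b_l=1$, $b_{l+2}=b_{l+4}=0$, together with the fact that the parity of $l$ is preserved by the recursion, as inherited from the invariance of ${\bf F}_\l^*$ under $z\mapsto -z$.

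Finally, the four displayed boundary equations for $b_0,b_1,b_2,b_3$ arise from the lowest powers of $z$, where $D_z^p z^k=0$ whenever $p>k$ causes some of the standard shift contributions to drop out. Concretely, equating the coefficients of $z^0$, $z^1$, $z^2$, $z^3$ produces the four special relations in the theorem statement, closing the recursion at the bottom. The main obstacle is purely computational: tracking the polynomial-in-$\l$ prefactors from \ef{BiPen.1}, carefully distributing the coefficients from $(1+z^2)^2$ and $(1+3z^2)$ across the $z^k$, $z^{k-2}$, $z^{k-4}$ pieces, and checking that the two parity classes $\{b_0,b_2,b_4,\ldots\}$ and $\{b_1,b_3,b_5,\ldots\}$ decouple as expected. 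Once this bookkeeping is organised, the recursion \ef{bico1} and its four boundary equations follow directly.
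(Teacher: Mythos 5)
Your proposal is correct and follows essentially the same route as the paper: substitute the polynomial ansatz \eqref{bieifun} into the quartic pencil \eqref{BiPen.1}, collect like powers of $z$ to obtain the three-term recursion for $k\ge 4$ and the four special relations from the coefficients of $z^0,z^1,z^2,z^3$, with the eigenvalue families \eqref{biei11} already supplied by the preceding lemma via the characteristic equation \eqref{bieig11}. The only difference is presentational (your monomial-by-monomial tabulation and explicit parity remark versus the paper's direct expansion of the sums), not mathematical.
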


 \noi{\em Proof.}
    Similarly to the proof of Theorem\;\ref{Th.1} due to the previous Lemma, via \eqref{biei11}  the pencil \ef{BiPen.1} has four discrete spectra
 \ef{biei11} of real negative eigenvalues with four families of finite ($z$-re-written harmonic) polynomial
 eigenfunctions given by \eqref{eig47}, of the polynomial form \ef{bieifun}, and
associated with the four families of eigenvalues $\l_l^1$,
$\l_l^2$, $\l_l^3$, and $\l_l^{4}$, such that
\[
   \psi_{l,1}^*(z)\equiv \psi_{l,1}^*(z),\quad
\psi_{l,2}^*(z)\equiv \psi_{l-1,1}^*(z),\quad
\psi_{l,3}^*(z)\equiv \psi_{l-2,3}^*(z), \quad
\psi_{l,4}^*(z)\equiv \psi_{l-3,4}^*(z).
  \]
  Then, substituting
$\psi_l^*=\sum_{k\geq l} a_k z^k$, for any $l\geq 0$, into
(\ref{BiPen.1})
    we obtain that, for any $\l$,
\begin{align*}
  \tex{ \l(\l^3
  } & \tex{
  +6\l^2+ 11\l+6)\sum\limits_{k\geq 0}^l b_k z^{k}  + 4\l(\l^3+6\l^2 +11) z \sum\limits_{k\geq 1}^l  k b_k  z^{k-1}
   }  \\ & \tex{
 +2\l(\l+5) (1+3z^2) \sum\limits_{k\geq 2}^l k(k-1) b_k z^{k-2}
  +4\l z(1+z^2)\sum\limits_{k\geq 3}^l k(k-1)(k-2) b_k z^{k-2}
  }  \\ &  \tex{
 +  (1+z^2)^2 \sum\limits_{k\geq 4}^l k(k-1)(k-2)(k-3) b_k z^{k-4}   + 12z(1+z^2) \sum\limits_{k\geq 3}^l k(k-1)(k-2)(k-3) b_k z^{k-3}
 } \\  &  \tex{
 +12(1+3z^2) \sum\limits_{k\geq 2}^l k(k-1) b_k z^{k-2}+ 24 z   \sum\limits_{k\geq 1}^l k b_k  z^{k-1}=0,
 }
 \end{align*}
and, hence, rearranging terms 
\be
\label{bicoefla}
\begin{split}
 \tex{ \sum\limits_{k\geq 4}
} & \tex{
[\l(\l^3 +6\l^2+ 11\l+6)+ 4\l (\l^2+6\l+11)+6\l(\l+5)k(k-1)
 }
 \\ &  \tex{
  +4\l k(k-1)(k-2)+k(k-1)(k-2)(k-3) +12k(k-1)(k-2)+36 k(k-1)
 }
  \\   &    \tex{
  +24k]b_k z^k
 +
\sum\limits_{k\geq 4} [ 2\l (\l+5) (k+2)(k+1)+4\l (k+2)(k+1) k+ 2 (k+2)(k+1)k(k-1)
 }
 \\ &  \tex{
+12(k+2)(k+1)k + 12(k+2)(k+1) ]b_{k+2} z^k
 }
 \\  &  \tex{
 + \sum\limits_{k\geq 4} (k+4)(k+3)(k+2)(k+1) b_{k+4}z^k=0.
 }
\end{split}
\ee
Also, the first
four terms of the polynomial \eqref{bieifun} provide us with the
following equations for the first coefficients:
   \[\l (\l^3 +6\l^2+
11\l+6) b_0 + [4(\l^2+5\l)+24] b_2+24 b_4=0,\] 
\[(\l^4 +10\l^3
+17\l^2 +17 \l +24)b_1+12(\l^2+7\l +12)b_3 +120 b_5=0,\] 
\[(\l^4
+10\l^3 +47 \l^2 +110\l +120) b_2+24(\l^2+9\l +20) b_4 + 360
b_6=0,\] 
\[ (\l^4+10\l^3 +71\l^2 +254 \l +460)b_3 +240(\l+5) b_5
+840 b_7=0,\]
    proving the expression \eqref{bico1}. This completes the proof.$\qed$

    \ssk
    
\noi{\bf Remark.} \rm{ Again we can deduce that those coefficients
might have the expression
 \begin{align*}
   & \tex{
    b_{l-2n}= -
    \frac{N(l,\l_{l,i})
   }{ D(l,\l_{l,i})} \,  b_{l-2n+2}- \frac{(l-2n+4)(l-2n+3)(l-2n+2)(l-2n+1)
   }{ D(l,\l_{l,i})} \,  b_{l-2n+4},}\\ & \tex{
    n=1,2,...,[\frac l2], \quad b_l=1,\quad i=1,2,3,4,
    }
    \end{align*}
    where
    \[\tex{ N(l,\l_{l,i})= (l-2n+2)(l-2n+1)\big[2(l-2n)(l-2n+11)+12+2\l_{l,i}(\l_{l,i}+5)+4\l_{l,i} (l-2n)\big],}\]
    \begin{align*}
    D(l,\l_{l,i}) & = 24(l-2n)+36(l-2n)(l-2n-1)+12(l-2n)(l-2n-1) (l-2n-2)\\ & + (l-2n)(l-2n-1) (l-2n-2)(l-2n-3)
    \\ & + 4\l_{l,i} (l-2n)(l-2n-1) (l-2n-2)+ 6\l_{l,i} (\l_{l,i}+5)(l-2n)(l-2n-1)\\ & +
    4\l_{l,i} (l-2n)\big[(\l_{l,i})^2 +6\l_{l,i}+11\big] + \l_{l,i}\big[ (\l_{l,i})^3 +6(\l_{l,i})^2+11\l_{l,i} +6\big].
    \end{align*}
  }

\begin{remark} 
  Note that, even though, in this case, due to the discrete spectra, we again find certain relations for the families of eigenvalues
 \[
  \tex{\l_{l,4} =\l_{l,1}-3 =\l_{l-3,1}, \quad \l_{l,3} =\l_{l,1}-2 =\l_{l-2,1} \quad  \hbox{and}\quad \l_{l,2} =\l_{l,1}-1 =\l_{l-1,1}
 ,}
\]
 However, we find different polynomials (four different
families) depending on the considered eigenvalue. Indeed, by the
analyticity, those are {\em harmonic} ones but represented in a
different manner by using the rescaled variable $z$.
\end{remark}


\subsection{Nonexistence result for the bi-Laplace crack problem}


First, we
observe that our generalised polynomials \eqref{bieifun} are
harmonic polynomials, so that  these are also complete in any
reasonable weighted $L^2$ space. Therefore, again in this
situation, sufficient regular solutions of \eqref{wwpen},
\eqref{zero} should admit the corresponding eigenfunction
expansions over the polynomial families
\[\Phi^*=\{\psi_{l,1}^*,\psi_{l,2}^*, \psi_{l,3}^*,\psi_{l,4}^*
\},\]
 such that
 \be
 \label{exp112}
  \tex{
  w(z,\t)= \sum_{(k \ge l)} \eee^{-k \t} [C_k\psi_{k,1}^*(z)+ D_k \psi_{k-1,2}^*(z)+E_k \psi_{k-2,3}^*(z)+F_k \psi_{k-3,4}^*(z)],
  }
  \ee
 where four collections of expansion coefficients $\{C_k\}$, $\{D_k\}$, $\{E_k\}$ and $\{F_k\}$
  (which depend on boundary
 data on $\O$) take place and such that
 \[\psi_{l,1}^*(z) \equiv \psi_{l,1}^*(z),\quad \psi_{l,2}^*(z) \equiv \psi_{l-1,2}^*(z),\quad \psi_{l,3}^*(z) 
 \equiv \psi_{l-2,3}^*(z),\quad \psi_{l,4}^*(z) \equiv \psi_{l-3,4}^*(z).\]
 Thus, we state the following result:

\begin{theorem}
\label{Th.4}
Let the cracks $\G_1$,...,$\G_m$ in \ef{curv1} be
asymptotically given by $m$ different straight lines \ef{str1}.
Then, the following hold:
\begin{enumerate}
\item[{\rm (i)}] If
 all
$\{\a_k\}$ do not coincide with  all $m$ subsequent zeros of any
non-trivial linear combination
 \be
 \label{dm12}
C_l \psi^*_{l,1}(z)+D_l \psi^*_{l,2}(z)+E_l \psi^*_{l,3}(z)+F_l \psi^*_{l,4}(z), \quad \mbox{with} \quad
C_l^2+D_l^2+ E_l^2+F_l^2 \ne 0,
 \ee
 of the finite transformed harmonic
 polynomials $\psi_{l,1}^*(z)$, $\psi_{l,2}^*(z)$,
$\psi_{l,3}^*(z)$, and $\psi_{l,4}^*(z)$ defined by \ef{bieifun},
\ef{bico1} for any $l=m,m+1,...$ and arbitrary constants $C_l, \,
D_l,\, E_l,\, F_l \in \re$, then the multiple crack problem
\ef{bila} cannot have a solution for any boundary Dirichlet data
$g$, $h$ on $\O$.
\item[{\rm (ii)}] If, for some $l$, the distribution of zeros in {\rm
(i)} holds and a solution $u(x,y)$ exists,
 then
 \be
  \tex{
  |u(x,y)| = O(|x,y|^{l})    \asA (x,y) \to (0,0).
    }
  \ee
  \end{enumerate}

\end{theorem}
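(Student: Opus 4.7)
The plan is to mirror the proof of Theorem \ref{Th.2}, now with the quartic pencil \eqref{BiPen.1} and its four polynomial families $\Phi^*=\{\psi^*_{l,1},\psi^*_{l,2},\psi^*_{l,3},\psi^*_{l,4}\}$ supplied by Theorem \ref{Th.3}. First, I would apply the blow-up scaling \eqref{resc1}, transforming \eqref{bila} together with the crack condition \eqref{Dir1} into the rescaled equation \eqref{wwpen} under the vanishing condition \eqref{zero}. By interior elliptic regularity and the completeness of $\Phi^*$ in the relevant weighted $L^2_\rho$-space, any such rescaled orbit admits the eigenfunction expansion \eqref{exp112}, converging uniformly on compact $z$-subsets as $\t\to+\iy$.

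Since $u\not\equiv 0$ (because $g^2+h^2\not\equiv 0$ in \eqref{bila}), there is a smallest index $l$ for which at least one of the four coefficients $C_l,D_l,E_l,F_l$ does not vanish. Setting
\[
\Phi_l(z):= C_l\psi_{l,1}^*(z)+D_l\psi_{l,2}^*(z)+E_l\psi_{l,3}^*(z)+F_l\psi_{l,4}^*(z),
\]
a nontrivial polynomial of degree at most $l$, the leading asymptotics of \eqref{exp112} reads
\[
w(z,\t)= \eee^{-l\t}\Phi_l(z) + O(\eee^{-(l+1)\t}) \asA \t\to+\iy,
\]
uniformly on compacts in $z$. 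Along each crack \eqref{str1} the scaled coordinate satisfies $z=\a_k(1+o(1))$ as $y\to 0^-$, so approaching the tip along $\G_k$ is equivalent to $\t\to+\iy$ with $z\to\a_k$. The Dirichlet condition \eqref{Dir1} then forces $\Phi_l(\a_k)=0$ for every $k=1,\ldots,m$; equivalently, the ordered distinct numbers $\a_1<\cdots<\a_m$ form $m$ subsequent real zeros of the nontrivial combination $\Phi_l$. Hypothesis (i) rules this out for every $l\ge m$ and every admissible tuple $(C_l,D_l,E_l,F_l)$, yielding a contradiction and proving (i).

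For (ii), if the $m$ numbers $\a_k$ do coincide with $m$ subsequent zeros of some such $\Phi_l$, the leading obstruction is lifted and a solution is admitted at that order. Undoing the scaling via $\eee^{-\t}=-y$ and $z^j\eee^{-l\t}=x^j(-y)^{l-j}$ converts $\eee^{-l\t}\Phi_l(z)$ into a homogeneous polynomial of degree $l$ in $(x,y)$, and the remainder, being of higher exponential order in $\t$, is of strictly higher order in $|(x,y)|$. This produces the pointwise bound $|u(x,y)|=O(|(x,y)|^l)$ as $(x,y)\to(0,0)$.

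The principal difficulty I anticipate is the rigorous passage from the Dirichlet condition on the \emph{only} asymptotically straight curves \eqref{str1} to the sharp pointwise vanishing $\Phi_l(\a_k)=0$: one must control the $o(1)$ deviation in \eqref{str1} against the exponential scale $\eee^{-l\t}$ and confirm that the higher-order terms in \eqref{exp112}, after restriction to $\G$, cannot produce cancellations that reinstate solvability. Both points reduce to the uniform-on-compacts convergence of \eqref{exp112} together with the analyticity of the polynomial eigenfunctions delivered by Theorem \ref{Th.3}, much as in the Laplace argument, but with the extra bookkeeping imposed by the four coexisting polynomial families.
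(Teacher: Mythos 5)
Your proposal follows essentially the same route as the paper: blow-up scaling to \eqref{wwpen}, the four-family expansion \eqref{exp112} converging uniformly on compact subsets in $z$, isolation of the first nontrivial term $\eee^{-l\t}\Phi_l(z)$, and identification of the admissible $\{\a_k\}$ with zeros of the linear combination \eqref{dm12}, mirroring the proof of Theorem \ref{Th.2} exactly as the paper does. Your remainder estimate $O(\eee^{-(l+1)\t})$ is in fact the more natural one (the paper writes $O(\eee^{-(l+3)\t})$ in \eqref{st.22}), but this does not change the argument since only the higher-order nature of the remainder is used.
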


 \noi{\em Proof.} To prove Theorem\;\ref{Th.4} we follow a similar 
 argument as that performed for Theorem\;\ref{Th.2}. Indeed, we can also assure that those
 expansions \eqref{exp112} will converge in $L^2_\rho$,
with an appropriate exponentially decaying weight at infinity and
uniformly on compact subsets. 

Thus, the elliptic evolution while
approaching the origin actually occurs on compact, arbitrarily
large subsets for $z= \frac x{(-y)}$. Now, choosing $$C_k^2+ D_k^2
+E_k^2 +F_k^2 \neq 0,\quad (\hbox{the first non-zero expansion
coefficients}),$$ we arrive at the sharp asymptotics of the
solution
\be
  \label{st.22}
  w_l(y,\t)= C_l \eee^{-l \t}[C_k\psi_{k,1}^*(z)+ D_k \psi_{k-1,2}^*(z)+E_k \psi_{k-2,3}^*(z)+F_k \psi_{k-3,4}^*(z)] + O(\eee^{-(l+3) \t}),
    \ee
  as $ \t \to +\iy$. Hence, we have that the straight-line cracks \ef{str1} correspond to zeros of
the linear combination \ef{dm12},
\[C_k\psi_{k,1}^*(z)+ D_k \psi_{k-1,2}^*(z)+E_k \psi_{k-2,3}^*(z)+F_k \psi_{k-3,4}^*(z),\]
proving Theorem\;\ref{Th.4}.$\qed$

\begin{remark} 
Concerning the positive existence counterpart of our analysis, the
result is the same: multiple cracks at 0 can occur {\em iff} the
boundary data is taken from the expansion \ef{exp112}. This allows us 
to derive the co-dimension of this linear subspace of admissible
data. If the tip of the cracks is not fixed at the origin, then
the unity of all data (and an appropriate closure, if necessary)
should be taken in \ef{exp112} over all tip crack points $x_0 \in
\O$.
\end{remark}

\vspace{0.5cm}
\noindent{\bf Acknowledgments:} The authors wish to thank the anonymous referees
and the editor for their valuable comments and suggestions.

\vspace{0.5cm}


\begin{thebibliography}{99}


\bibitem
 {Adm}
 R.A.~Adams, {\rm Sobolev Spaces}, Pure and Applied Mathematics,
\textbf{65}, Acad. Press, New York--London, 1975.



\bibitem
 {AdPiv}
 V.~Adamyan and V.~Pivovarchik, {\em On spectra of some classes of quadratic operator pencils},
 Contributions to operator theory in spaces with an indefinite metric (Vienna, 1995), Oper. Theory Adv. Appl.,
  \textbf{106}, Birkh\"auser, Basel, (1998), pp. 23--36.





\bibitem
 {CGpLap}
 P.~\'Alvarez-Caudevilla and V.A.~Galaktionov, {\em
The $p$-Laplace equation in domains with multiple crack
section via pencil operators}, Advances Nonlinear Studies, in press.






\bibitem{Det} J. W. Dettman, {Mathematical Methods in Physics and Engineering},
Mc-Graw-Hill, New York, 1969.


\bibitem{EGKP}  Yu.V. Egorov, V.A. Galaktionov, V.A. Kondratiev, and S.I. Pohozaev,
\emph{Global solutions of higher-order semilinear parabolic
equations in the supercritical range}, {Adv. Differ. Equat.,}
\textbf{9} (2004),  1009--1038.


\bibitem{Fun} D. Funaro, {Polynomial Approximation of Differential Equations},
Springer-Verlag, Berlin/Tokyo, 1992.



\bibitem 
 {GalGeom}
 V.A.~Galaktionov, {\rm Geometric Sturmian  Theory of Nonlinear
 Parabolic Equations and Applications}, Chapman$\,\&\,$Hall/CRC, Boca Raton--Washington D.C.,
Florida,
 2004.




 \bibitem
 {GalHardy}
 V.A.~Galaktionov, {\em On extensions of Hardy's inequalities}, Comm.  Cont. Math.,
\textbf{7} (2005), 97--120.







\bibitem
{KolF} A.N.~Kolmogorov and S.V.~Fomin, {\rm Elements of the Theory
of Functions and Functional Analysis}, Nauka, Moscow, 1976.


\bibitem{Kon1} V.A.~Kondrat'ev, {\it Boundary value problems for parabolic equations in closed regions},
Trans. Moscow Math. Soc., \textbf{15} (1966), 400--451.

\bibitem{Kon2} V.A.~Kondrat'ev, {\it Boundary value problems for elliptic equations in domains with conical or angular points},
Trans. Moscow Math. Soc., \textbf{16} (1967), 209--292.

\bibitem{KL} M.~Krein and H.~Langer, {\em
 On some mathematical principles in the linear theory of damped oscillations of continua. I, II,}
{Int. Equat.  Oper. Theory}, \textbf{1} (1978), 364--399,
539--566.




\bibitem{AL} A.~Lemenant,
\emph{On the homogeneity of global minimizers for the Mumford-Shah
functional when $K$ is a smooth cone}, {Rend. Sem. Mat. Univ.
Padova}, \textbf{122} (2009), 129--159.



\bibitem{Mar} A.S.~Markus, {\rm Introduction to
 Spectral Theory of Polynomial Operator Pencils}, Translated from the Russian by H. H. McFaden.
 Translation edited by Ben Silver. With an appendix by M. V. Keldysh. Transl. of Math. Mon.,
 \textbf{71}, Amer. Math. Soc., Providence, RI, 1988.



\bibitem  
{St}   C.~Sturm, {\em  M\'emoire sur une classe d'\'equations \`a
diff\'erences partielles,} {J. Math. Pures Appl.,} {\bf 1} (1836),
373--444.


\end{thebibliography}
\end{document}